\newtheorem{theorem}{Theorem}[section]
\newtheorem{lemma}[theorem]{Lemma}
\newtheorem{proposition}[theorem]{Proposition}
\newtheorem{cor}[theorem]{Corollary}
\theoremstyle{definition}
\newtheorem{definition}[theorem]{Definition}
\theoremstyle{remark}
\numberwithin{equation}{section}
\begin{document}

\title [Euclidean operator radius inequalities of $d$-tuple operators] {Euclidean operator radius inequalities of $d$-tuple operators and operator matrices}

\author[S. Jana, P. Bhunia, K. Paul] {Suvendu Jana, Pintu Bhunia, Kallol Paul*}

\address{(Jana) Department of Mathematics, Mahishadal Girls' College, Purba Medinipur 721628, West Bengal, India} \email{janasuva8@gmail.com}

\address{(Bhunia) Department of Mathematics, Indian Institute of Science, Bengaluru-560012, Karnataka, India} \email{pintubhunia5206@gmail.com}

\address{(Paul) Department of Mathematics, Jadavpur University, Kolkata 700032, West Bengal, India} \email{kalloldada@gmail.com}


\thanks{* Corresponding author. \\Dr. Pintu Bhunia would like to thank SERB, Govt. of India for the financial support in the form of National Post Doctoral Fellowship (N-PDF, File No. PDF/2022/000325) under the mentorship of Prof. Apoorva Khare}

\renewcommand{\subjclassname}{\textup{2020} Mathematics Subject Classification}\subjclass[]{Primary 47A12, Secondary 15A60, 47A30, 47A50}
\keywords{Euclidean operator radius, Numerical radius, Operator norm, Bounded linear operator}

\maketitle

\begin{abstract}
	In this paper, we develop several Euclidean operator radius inequalities of $d$-tuple operators, as well as the sum and the product of $d$-tuple operators. Also, we obtain a power inequality for the Euclidean operator radius.
	Further, we develop Euclidean operator radius inequalities of $2\times 2$ operator matrices whose entries are $d$-tuple operators.
\end{abstract}


\section{Introduction}
\noindent 
Let $\mathscr{H}$ be a complex Hilbert space with usual inner product $\langle \cdot,\cdot \rangle $, and  $\|\cdot\|$ be the norm induced by the inner product. Let $ \mathbb{B}(\mathscr{H})$ denote the $C^*$-algebra of all bounded linear operators on $\mathscr{H}.$ For any $T\in\mathbb{B}(\mathscr{H})$, the numerical range of $T$ is given by $ W(T)=\{ \langle Tx,x\rangle: x\in\mathscr{H}, \|x\|=1\}$ and the numerical radius of $T$, denoted by $ w(T)$, is defined as $ w(T)=\sup\hspace{0.2cm} \{ |\lambda| : \lambda\in W(T) \}.$ It is well known that $w(\cdot)$ defines a norm on $\mathbb{B}(\mathscr{H})$, and it satisfies the inequality $\frac12 \|T\|\leq w(T) \leq \|T\|$.
For further readings on the numerical range and the numerical radius inequalities, see the books \cite{BDMP, GW, GUS}.
Now, let $\mathbb{B}^d(\mathscr{H})= \mathbb{B}(\mathscr{H}) \times \mathbb{B}(\mathscr{H}) \times \ldots \times \mathbb{B}(\mathscr{H})$ ($d$ times) and   let $\mathbf{T}=(T_1,T_2,\ldots,T_d)\in  \mathbb{B}^d(\mathscr{H})$ be a $d$-tuple operator. The joint numerical range, joint numerical radius, joint Crawford number and joint operator norm of $\mathbf{T}$ are defined respectively as follows:
\begin{eqnarray*}
	&&  JtW(\mathbf{T})= \{ ( \langle T_1 x,x\rangle,\langle T_2  x,x\rangle,\ldots,\langle T_d x,x\rangle):  x\in\mathscr{H}, \|x\|=1 \},\\
	&&w_e (\mathbf{T})=\sup \left\lbrace \left( \sum_{k=1}^{d} |\langle T_k x,x\rangle|^2 \right)^\frac{1}{2}:  x\in\mathscr{H}, \|x\|=1\right\rbrace,\\
	&&c_e(\mathbf{T})=\inf \left\lbrace \left( \sum_{k=1}^{d} |\langle T_k x,x\rangle|^2 \right)^\frac{1}{2}:  x\in\mathscr{H}, \|x\|=1\right\rbrace,\\
	&&\|\mathbf{T}\|=\sup \left\lbrace \left( \sum_{k=1}^{d} \|T_kx\|^2 \right)^\frac12:x\in\mathscr{H}, \|x\|=1\right\rbrace.
\end{eqnarray*}
Note that $w_e (\mathbf{T})$ is also known as the Euclidean operator radius of $d$-tuple operator $\mathbf{T}.$ 
As pointed out in \cite{P}, $w_e(\cdot)$ is a norm on $\mathbb{B}^d(\mathscr{H})$ and satisfies the following inequality: \begin{eqnarray}
\frac{1}{2\sqrt{d}}\|\sum_{k=1}^{d} T_k^*T_k\|^{\frac12}\leq w_e(\mathbf{T})\leq\|\sum_{k=1}^{d} T_k^*T_k\|^{\frac12}.\label{T1}\end{eqnarray}
 Here the constant $\frac{1}{2\sqrt{d}}$ and $1$ are best possible.
For the latest and recent improvements of (\ref{T1}) the reader can see \cite{SD,SPK,MSS} and the references therein.
Next, we recall the following definitions of commuting $d$-tuple operator.
  \begin{definition} Let $\mathbf{T}=(T_1,T_2,\ldots,T_d)\in \mathbb{B}^d(\mathscr{H})$ be a $d$-tuple operator. Then $\mathbf{T}$ is said to be commuting if $T_iT_j=T_jT_i$ for all $ i,j=1,2,\ldots,d.$ \end{definition} 
 \begin{definition} \cite{C}  Let $\mathbf{T}=(T_1,T_2,\ldots,T_d)\in \mathbb{B}^d(\mathscr{H})$ be a $d$-tuple operator. Then $\mathbf{T}$  is said to be joint normal (or simply normal) if $\mathbf{T}$ is commuting and each $T_i$ is normal.
 \end{definition}
For $d$-tuple operators $ \mathbf{S}=(S_1,S_2,\ldots,S_d)$, $\mathbf{T}=(T_1,T_2,\ldots,T_d)\in\mathbb{B}^d(\mathscr{H})$, we write $\mathbf{ST}=(S_1T_1,S_2T_2,\ldots,S_dT_d)$, $\mathbf{S}+\mathbf{T}=(S_1+T_1,S_2+T_2,\ldots,S_d+T_d)$  
and $\alpha \mathbf{T}=(\alpha T_1,\alpha T_2, \ldots,\alpha T_d)$ for any scalar $\alpha \in\mathbb{C}$. 
Also, for $\mathbf{X}=(X_1,X_2, \ldots,X_d)$, $\mathbf{Y}=(Y_1,Y_2, \ldots,Y_d)$, $\mathbf{Z}=(Z_1,Z_2, \ldots,Z_d)$, $\mathbf{W}=(W_1,W_2, \ldots,W_d)\in\mathbb{B}^d(\mathscr{H})$,  the $2\times 2$ operator matrix, whose entries are $d$-tuple operators $\mathbf{X}, \mathbf{Y}, \mathbf{Z}, \mathbf{W},$ is defined as 
  $$\begin{bmatrix}
 \mathbf{X} & \mathbf{Y} \\
 \mathbf{Z} &  \mathbf{W} 
\end{bmatrix}=\left(\begin{bmatrix}
X_1 & Y_1 \\
Z_1 & W_1
\end{bmatrix} , \begin{bmatrix}
X_2 & Y_2 \\
Z_2 & W_2
\end{bmatrix}, \ldots,\begin{bmatrix}
X_d & Y_d \\
Z_d & W_d
\end{bmatrix}\right)\in\mathbb{B}^d(\mathscr{H}\oplus\mathscr{H}).$$ 
Note that $\mathscr{H}\oplus \mathscr{H}$ is a Hilbert space with the inner product  defined as  $$\langle (
x_1,
x_2
),(
y_1,
y_2
)\rangle=\langle x_1, y_1\rangle + \langle x_2, y_2\rangle,$$ 
for all  $(x_1,x_2)$ and $(y_1,y_2)\in\mathscr{H}\oplus\mathscr{H}.$

Motivated by the existing Euclidean operator radius inequalities (see \cite{BF,SD1,SD,SPK,MSS,P}), here we develop various new inequalities involving joint numerical radius and  joint operator norm of $d$-tuple operators. The inequalities provide lower and upper bounds for the joint numerical radius of $d$-tuple operators, the sum and the product of $d$-tuple operators. Further, we study the joint numerical radius inequalities of $2\times 2$ operator matrices whose entries are $d$-tuple operators, from which we derive some Euclidean operator radius inequalities. We also obtain a joint operator norm inequality of  $2\times 2$ operator matrices.

\section{Joint numerical radius of $d$-tuple operators}

We begin this section with the following proposition, proof of which follows from the definition of the joint operator norm, also see in \cite{KY}. 
 
  \begin{proposition}  
  	If $\mathbf{T}=(T_1,T_2, \ldots,T_d)\in\mathbb{B}^d(\mathscr{H}),$ then $$\|\mathbf{T}\|=\sqrt{\|T_1^*T_1+T_2^*T_2+ \ldots+T_d^*T_d\|}.$$
 \label{leml1}\end{proposition}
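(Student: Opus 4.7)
The plan is to rewrite the quantity $\sum_{k=1}^{d}\|T_k x\|^2$ as a single inner product involving the operator $S := \sum_{k=1}^{d} T_k^*T_k$, and then appeal to the standard fact that the numerical radius of a positive operator coincides with its operator norm.

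First, for any $x \in \mathscr{H}$ I would expand each summand using the adjoint relation:
\[
\|T_k x\|^2 = \langle T_k x, T_k x\rangle = \langle T_k^* T_k x, x\rangle.
\]
Summing over $k = 1,\ldots,d$ and pulling the sum inside the linear inner product yields
\[
\sum_{k=1}^{d} \|T_k x\|^2 = \left\langle \left(\sum_{k=1}^{d} T_k^* T_k\right) x, x\right\rangle = \langle S x, x\rangle.
\]
Taking the supremum over unit vectors $x$ and using the definition of $\|\mathbf{T}\|$ given in the introduction gives
\[
\|\mathbf{T}\|^2 = \sup_{\|x\|=1} \langle Sx, x\rangle.
\]

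The second step is to identify the right-hand side. Since each $T_k^* T_k$ is positive, so is $S$; in particular $S$ is self-adjoint, and $\langle Sx, x\rangle \geq 0$ for every $x$. Hence
\[
\sup_{\|x\|=1} \langle Sx, x\rangle = \sup_{\|x\|=1} |\langle Sx, x\rangle| = w(S).
\]
For self-adjoint (a fortiori positive) operators one has the well-known equality $w(S) = \|S\|$, which in fact follows here even more directly from the positivity of $S$ via $\|S\| = \sup_{\|x\|=1}\langle Sx, x\rangle$. Combining these identities yields
\[
\|\mathbf{T}\|^2 = \left\|\sum_{k=1}^{d} T_k^* T_k\right\|,
\]
and taking square roots delivers the claimed formula.

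There is essentially no obstacle here: the proof is a two-line manipulation followed by citation of the identity $w(S)=\|S\|$ for positive $S$. The only point that deserves a brief justification in the write-up is exactly this last identity, which can be inserted as a one-sentence remark if the authors want the proposition to be self-contained.
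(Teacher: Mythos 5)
Your argument is correct and is exactly the intended one: the paper gives no written proof, merely noting that the identity ``follows from the definition of the joint operator norm'' (citing \cite{KY}), and your computation $\sum_{k}\|T_kx\|^2=\langle (\sum_k T_k^*T_k)x,x\rangle$ together with $\|S\|=\sup_{\|x\|=1}\langle Sx,x\rangle$ for positive $S$ is precisely that omitted two-line verification. Nothing further is needed.
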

Proposition \ref{leml1}  together with the inequality \eqref{T1} leads to the inequality 
\begin{eqnarray}\label{pp00}
	\frac{1}{2\sqrt{d}} \| \mathbf{T}\| \leq w_e(\mathbf{T})\leq \|\mathbf{T}\|.
\end{eqnarray}

We now prove one of our main result, which gives an inequality involving the Euclidean operator radius.

\begin{theorem}
 Let $\mathbf{T}=(T_1,T_2, \ldots,T_d)\in\mathbb{B}^d(\mathscr{H})$.  Then for $x\in\mathscr{H}$, the following inequality holds: $$ \sum_{k=1}^{d}\|T_kx\|^2+\sum_{k=1}^{d}|\langle T_k^2x,x\rangle|\leq2\sqrt{d}\,w_e(\mathbf{T})\left( \sum_{k=1}^{d}\|T_kx\|^2\right)^\frac12\|x\|.$$
\label{lem2}\end{theorem}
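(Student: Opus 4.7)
The plan is to reduce the tuple inequality to a one‑operator estimate of the form
$$\|Ax\|^{2}+|\langle A^{2}x,x\rangle|\le 2w(A)\,\|Ax\|\,\|x\|,$$
applied to each $T_{k}$, and then to aggregate over $k=1,\dots,d$ using the Cauchy--Schwarz inequality together with the trivial bound $w(T_{k})\le w_{e}(\mathbf{T})$.

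\textbf{Step 1 (single--operator inequality).} For a fixed index $k$ and $x\in\mathscr{H}$, I would write $\langle T_{k}^{2}x,x\rangle=e^{i\phi_{k}}|\langle T_{k}^{2}x,x\rangle|$ and notice the algebraic identity
$$\|T_{k}x\|^{2}+|\langle T_{k}^{2}x,x\rangle|
=\langle T_{k}x,T_{k}x\rangle+e^{-i\phi_{k}}\langle T_{k}x,T_{k}^{*}x\rangle
=\langle T_{k}x,\,(T_{k}+e^{i\phi_{k}}T_{k}^{*})x\rangle.$$
The operator $B_{k}:=e^{-i\phi_{k}/2}T_{k}+e^{i\phi_{k}/2}T_{k}^{*}$ is self--adjoint, and $(T_{k}+e^{i\phi_{k}}T_{k}^{*})x=e^{i\phi_{k}/2}B_{k}x$. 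Applying the Cauchy--Schwarz inequality and then $\|B_{k}\|=w(B_{k})\le w(T_{k})+w(T_{k}^{*})=2w(T_{k})$ (since $B_{k}$ is self--adjoint) gives
$$\|T_{k}x\|^{2}+|\langle T_{k}^{2}x,x\rangle|\le \|T_{k}x\|\cdot\|B_{k}x\|\le 2w(T_{k})\,\|T_{k}x\|\,\|x\|.$$

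\textbf{Step 2 (aggregation).} Summing the estimate over $k=1,\dots,d$ yields
$$\sum_{k=1}^{d}\|T_{k}x\|^{2}+\sum_{k=1}^{d}|\langle T_{k}^{2}x,x\rangle|\le 2\|x\|\sum_{k=1}^{d}w(T_{k})\,\|T_{k}x\|,$$
and one application of Cauchy--Schwarz to the right--hand sum gives the bound
$$\sum_{k=1}^{d}w(T_{k})\|T_{k}x\|\le\Bigl(\sum_{k=1}^{d}w(T_{k})^{2}\Bigr)^{1/2}\Bigl(\sum_{k=1}^{d}\|T_{k}x\|^{2}\Bigr)^{1/2}.$$

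\textbf{Step 3 (passing to $w_{e}$).} Finally, for every unit vector $y\in\mathscr{H}$,
$$|\langle T_{k}y,y\rangle|^{2}\le \sum_{j=1}^{d}|\langle T_{j}y,y\rangle|^{2}\le w_{e}(\mathbf{T})^{2},$$
so $w(T_{k})\le w_{e}(\mathbf{T})$ for each $k$, whence $\bigl(\sum_{k}w(T_{k})^{2}\bigr)^{1/2}\le\sqrt{d}\,w_{e}(\mathbf{T})$. Combining with Step 2 gives the claimed inequality.

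\textbf{Main obstacle.} The only nontrivial piece is Step 1: setting up the correct phase $\phi_{k}$ so that the sum $\|T_{k}x\|^{2}+|\langle T_{k}^{2}x,x\rangle|$ appears as a single inner product, and recognizing the self--adjoint operator $B_{k}$ whose norm is controlled by $2w(T_{k})$. After that, Steps 2 and 3 are standard Cauchy--Schwarz and the definition of $w_{e}(\mathbf{T})$.
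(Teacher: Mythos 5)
Your proof is correct, and while its overall architecture matches the paper's — both arguments reduce to the single-operator estimate $\|T_kx\|^2+|\langle T_k^2x,x\rangle|\le 2w(T_k)\|T_kx\|\,\|x\|$, followed by summation over $k$, one Cauchy--Schwarz in $k$, and the bound $w(T_k)\le w_e(\mathbf{T})$ — your derivation of that key estimate is genuinely different. The paper expresses $\sum_k\|T_kx\|^2+\sum_k e^{2i\theta_k}\langle T_k^2x,x\rangle$ as a difference of two inner products of the form $e^{i\theta_k}\langle T_ku_{\pm},u_{\pm}\rangle$ with $u_{\pm}=\lambda_ke^{i\theta_k}T_kx\pm\lambda_k^{-1}x$, bounds each by $w(T_k)\|u_{\pm}\|^2$, applies the parallelogram law, and then optimizes over the free parameter at $\lambda_k=\sqrt{\|x\|/\|T_kx\|}$, with a separate remark needed for the degenerate case $T_kx=0$. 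You instead collapse the left-hand side into the single inner product $\langle T_kx,(T_k+e^{i\phi_k}T_k^*)x\rangle$, apply Cauchy--Schwarz once, and control the resulting factor through $\|B_k\|=w(B_k)\le w(T_k)+w(T_k^*)=2w(T_k)$ for the self-adjoint operator $B_k=e^{-i\phi_k/2}T_k+e^{i\phi_k/2}T_k^*$. Your route avoids both the parameter optimization and the case distinction for $T_kx=0$, at the cost of invoking the (standard) facts that $\|B\|=w(B)$ for self-adjoint $B$ and that $w$ is a subadditive, adjoint- and rotation-invariant norm; the paper's route is more self-contained, using only $|\langle Tu,u\rangle|\le w(T)\|u\|^2$. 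Your intermediate quantity $\bigl(\sum_k w(T_k)^2\bigr)^{1/2}$ is also formally a little sharper than the paper's immediate replacement of each $w(T_k)$ by $w_e(\mathbf{T})$, though both yield the same stated constant $\sqrt{d}\,w_e(\mathbf{T})$.
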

\begin{proof}
Let $\lambda_k$ and $\theta_k$ ($ k=1,2,\ldots,d$) be real numbers with $\lambda_k\neq 0$. Then, we have
\begin{eqnarray*}
&& \sum_{k=1}^{d}\|T_kx\|^2+\sum_{k=1}^{d}e^{2i\theta_k}\langle T_k^2x,x\rangle \\ &&=\sum_{k=1}^{d} \Big\{\frac12\langle \lambda_ke^{2i\theta_k}T_k^2x+\lambda_k^{-1}e^{i\theta_k}T_kx,\lambda_ke^{i\theta_k}T_kx+\lambda_k^{-1}x\rangle \\&& \,\,\,\,\,\,-\frac12\langle \lambda_ke^{2i\theta_k}T_k^2x-\lambda_k^{-1}e^{i\theta_k}T_kx,\lambda_ke^{i\theta_k}T_kx-\lambda_k^{-1}x\rangle \Big\}.
\end{eqnarray*}
Hence,
\begin{eqnarray*}
&&\big|\sum_{k=1}^{d}\|T_kx\|^2+\sum_{k=1}^{d}e^{2i\theta_k}\langle T_k^2x,x\rangle\big|\\&&\leq\sum_{k=1}^{d}\frac12\big|\langle \lambda_ke^{2i\theta_k}T_k^2x+\lambda_k^{-1}e^{i\theta_k}T_kx,\lambda_ke^{i\theta_k}T_kx+\lambda_k^{-1}x\rangle\big| \\&&\,\,\,+\sum_{k=1}^{d}\frac12\big|\langle \lambda_ke^{2i\theta_k}T_k^2x-\lambda_k^{-1}e^{i\theta_k}T_kx,\lambda_ke^{i\theta_k}T_kx-\lambda_k^{-1}x\rangle\big| \\&&\leq\sum_{k=1}^{d}\frac12w(T_k)\| \lambda_ke^{i\theta_k}T_kx+\lambda_k^{-1}x\|^2 +\sum_{k=1}^{d}\frac12w(T_k)\| \lambda_ke^{i\theta_k}T_kx-\lambda_k^{-1}x\|^2.
\end{eqnarray*}
Since $|\langle T_kx,x\rangle|\leq\left(\sum_{k=1}^{d}|\langle T_kx,x\rangle|^2\right)^\frac12$ for all $x\in \mathscr{H}$,  $w(T_k)\leq w_e(\mathbf{T})$.
 Thus,
 \begin{eqnarray*}
&&\big|\sum_{k=1}^{d}\|T_kx\|^2+\sum_{k=1}^{d}e^{2i\theta_k}\langle T_k^2x,x\rangle\big|\\
&&\leq\sum_{k=1}^{d}\frac12w_e(\mathbf{T})\| \lambda_ke^{i\theta_k}T_kx+\lambda_k^{-1}x\|^2 +\sum_{k=1}^{d}\frac12w_e(\mathbf{T})\| \lambda_ke^{i\theta_k}T_kx-\lambda_k^{-1}x\|^2\\&&= w_e(\mathbf{T}) \sum_{k=1}^{d}\left\lbrace\frac12\| \lambda_ke^{i\theta_k}T_kx+\lambda_k^{-1}x\|^2 +\frac12\| \lambda_ke^{i\theta_k}T_kx-\lambda_k^{-1}x\|^2\right\rbrace\\
&&= w_e(\mathbf{T})\sum_{k=1}^{d}\left\lbrace\lambda_k^2\|T_kx\|^2+\lambda_k^{-2}\|x\|^2\right\rbrace. 
\end{eqnarray*}
Suppose $T_kx\neq0$ for all $ k=1,2,\dots,d$, and we choose $\theta_k$  in such a way that $e^{2i\theta_k}\langle T_k^2x,x\rangle=|\langle T_k^2x,x\rangle|$ and $\lambda_k=\sqrt{\frac{\|x\|}{\|T_kx\|}}$ for all $ k=1,2, \ldots,d.$ Then, we have
\begin{eqnarray*}
\sum_{k=1}^{d}\|T_kx\|^2+\sum_{k=1}^{d}|\langle T_k^2x,x\rangle|&\leq& 2w_e(\mathbf{T})\sum_{k=1}^{d}\|T_kx\|\|x\|.
\end{eqnarray*}
Therefore, the Cauchy-Schwarz inequality implies that 
\begin{eqnarray*}
	\sum_{k=1}^{d}\|T_kx\|^2+\sum_{k=1}^{d}|\langle T_k^2x,x\rangle|&\leq& 2\sqrt{d} w_e(\mathbf{T})\left(\sum_{k=1}^{d}\|T_kx\|^2\right)^\frac12\|x\|.
\end{eqnarray*}
Also, this inequality holds when $\|T_kx\|=0$ for all or some $k\in \{1,2,\ldots,d\}$. This completes the proof.
\end{proof}

Applying Theorem \ref{lem2} we derive the following corollary. 

\begin{cor}
If $\mathbf{T}=(T_1,T_2, \ldots,T_d)\in\mathbb{B}^d(\mathscr{H}),$ then 
$$ \frac1d w_e(\mathbf{T}^2)\leq  w_e^2(\mathbf{T}).$$

\label{lem3}\end{cor}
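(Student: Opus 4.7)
The plan is to derive the corollary directly from Theorem \ref{lem2} by converting the $\ell^1$-type sum $\sum_k |\langle T_k^2 x, x\rangle|$ that appears there into the $\ell^2$-type quantity that defines $w_e(\mathbf{T}^2) = \sup_{\|x\|=1}\bigl(\sum_k |\langle T_k^2 x, x\rangle|^2\bigr)^{1/2}$, and then absorbing the remaining $\sum_k \|T_kx\|^2$ term through a quadratic-in-one-variable estimate.

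First I would fix $x\in\mathscr{H}$ with $\|x\|=1$ and abbreviate
\[
A := \Bigl(\sum_{k=1}^d \|T_kx\|^2\Bigr)^{1/2}, \qquad B := \Bigl(\sum_{k=1}^d |\langle T_k^2 x, x\rangle|^2\Bigr)^{1/2}.
\]
Theorem \ref{lem2} (with $\|x\|=1$) then reads $A^2 + \sum_{k=1}^d |\langle T_k^2 x, x\rangle| \leq 2\sqrt{d}\, w_e(\mathbf{T})\, A$. Since the numbers $|\langle T_k^2 x, x\rangle|$ are nonnegative, the elementary inequality $\bigl(\sum_k a_k\bigr)^2 \geq \sum_k a_k^2$ gives $\sum_{k=1}^d |\langle T_k^2 x, x\rangle| \geq B$, so
\[
A^2 + B \leq 2\sqrt{d}\, w_e(\mathbf{T})\, A.
\]

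Next I would handle the mixed $A$--$w_e(\mathbf{T})$ term by completing the square: the trivial bound $\bigl(A - \sqrt{d}\, w_e(\mathbf{T})\bigr)^2 \geq 0$ rearranges to $2\sqrt{d}\, w_e(\mathbf{T})\, A \leq A^2 + d\, w_e^2(\mathbf{T})$. Chaining this with the previous inequality and cancelling $A^2$ on both sides yields $B \leq d\, w_e^2(\mathbf{T})$. Taking the supremum over all unit vectors $x$ then gives $w_e(\mathbf{T}^2) \leq d\, w_e^2(\mathbf{T})$, which is the claim.

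I do not anticipate any real obstacle: once one recognizes that $\sum a_k \geq \sqrt{\sum a_k^2}$ for nonnegative $a_k$ is exactly the bridge from Theorem \ref{lem2} to the Euclidean-radius definition, the rest is a two-line AM--GM/complete-the-square estimate. The only thing to double-check is the convention $\mathbf{T}^2 = (T_1^2,\ldots,T_d^2)$, which follows from the entry-wise product defined earlier in the paper.
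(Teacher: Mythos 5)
Your proposal is correct and follows essentially the same route as the paper: both use the bound $\sum_k |\langle T_k^2x,x\rangle| \geq \bigl(\sum_k |\langle T_k^2x,x\rangle|^2\bigr)^{1/2}$ to pass from Theorem \ref{lem2} to the Euclidean quantity, and then the same completing-the-square step (the paper writes it as $\bigl(A-\sqrt{d}\,w_e(\mathbf{T})\bigr)^2 + B \leq d\,w_e^2(\mathbf{T})$, which is just your rearrangement of $2\sqrt{d}\,w_e(\mathbf{T})A \leq A^2 + d\,w_e^2(\mathbf{T})$). No gaps.
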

\begin{proof}
 From Theorem \ref{lem2} and together with $ \sum_{k=1}^{d}|\langle T_k^2 x,x\rangle|^2\leq\left(\sum_{k=1}^{d}|\langle T_k^2 x,x\rangle|\right)^2$, we have
 \begin{eqnarray}
  \sum_{k=1}^{d}\|T_kx\|^2+\left(\sum_{k=1}^{d}|\langle T_k^2x,x\rangle|^2\right)^\frac12\leq2\sqrt{d}w_e(\mathbf{T})\left( \sum_{k=1}^{d}\|T_kx\|^2\right)^\frac12\|x\|.\label{eqnl3}\end{eqnarray} 
  Taking $\|x\|=1$, we get
  \begin{eqnarray*}
 &&\sum_{k=1}^{d}\|T_kx\|^2+\left(\sum_{k=1}^{d}|\langle T_k^2x,x\rangle|^2\right)^\frac12\leq2\sqrt{d}w_e(\mathbf{T})\left( \sum_{k=1}^{d}\|T_kx\|^2\right)^\frac12.
\end{eqnarray*}
  This implies
  \begin{eqnarray*} \left((\sum_{k=1}^{d}\|T_kx\|^2)^\frac12-\sqrt{d}w_e(\mathbf{T})\right)^2+\left(\sum_{k=1}^{d}|\langle T_k^2x,x\rangle|^2\right)^\frac12\leq d w_e^2(\mathbf{T}).\end{eqnarray*}
  Therefore, 
  $$\left(\sum_{k=1}^{d}|\langle T_k^2x,x\rangle|^2\right)^\frac12\leq d w_e^2(\mathbf{T}).$$
  Taking supremum over all $x\in \mathscr{H}$ with $\|x\|=1$, we get the desired inequality.
\end{proof}

Next, we obtain a refinement of the first inequality in \eqref{pp00}.

\begin{theorem} \label{lemm1}
	Let $\mathbf{T}=(T_1,T_2, \ldots, T_d) \in {\mathbb{B}^d(\mathscr{H})}$ and $\|\mathbf{T}\|\ne0$. Then
		\begin{eqnarray*}
			\frac{1}{2\sqrt{d}}\left\lbrace \|\mathbf{T}\|+\frac{c_e(\mathbf{T^2})}{\|\mathbf{T}\|}\right\rbrace\leq w_e(\mathbf{T}).
		\end{eqnarray*} 
\end{theorem}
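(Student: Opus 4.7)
The plan is to reuse inequality (2.3) derived in the proof of Corollary \ref{lem3}, which already does the heavy lifting. Writing $A(x) := \bigl(\sum_{k=1}^{d}\|T_k x\|^2\bigr)^{1/2}$ and $B(x) := \bigl(\sum_{k=1}^{d}|\langle T_k^2 x,x\rangle|^2\bigr)^{1/2}$, that inequality reads, for every unit vector $x\in\mathscr{H}$,
$$A(x)^2 + B(x) \leq 2\sqrt{d}\, w_e(\mathbf{T})\, A(x).$$
This single inequality contains both quantities we want to control: $A(x)$ relates to $\|\mathbf{T}\|$ via Proposition \ref{leml1}, while $B(x)$ relates to $c_e(\mathbf{T}^2)$ by definition.

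The next step is to divide through by $A(x)$, which is legitimate as soon as $A(x) > 0$. Since $\|\mathbf{T}\| \neq 0$, Proposition \ref{leml1} lets us choose a sequence of unit vectors $\{x_n\}$ with $A(x_n) \to \|\mathbf{T}\|$, and certainly $A(x_n) > 0$ for all large $n$. For every unit vector $x$ one has $B(x) \geq c_e(\mathbf{T}^2)$ by the very definition of $c_e$, so for each such $x_n$,
$$A(x_n) + \frac{c_e(\mathbf{T}^2)}{A(x_n)} \leq A(x_n) + \frac{B(x_n)}{A(x_n)} \leq 2\sqrt{d}\, w_e(\mathbf{T}).$$
Letting $n \to \infty$ and using continuity of $t \mapsto t + c_e(\mathbf{T}^2)/t$ at $t = \|\mathbf{T}\|$ produces
$$\|\mathbf{T}\| + \frac{c_e(\mathbf{T}^2)}{\|\mathbf{T}\|} \leq 2\sqrt{d}\, w_e(\mathbf{T}),$$
and dividing by $2\sqrt{d}$ gives the claim.

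There is no serious obstacle, because all the analytic work was done in Theorem \ref{lem2}; the present argument is a bookkeeping step. The one point worth a word of justification is that one cannot simply pass to the supremum in $A$ and the infimum in $B$ simultaneously, since the same $x$ governs both sides of (2.3). The remedy is to replace $B(x)$ by the lower bound $c_e(\mathbf{T}^2)$ first — decoupling the $x$-dependence — and only afterwards to drive $A(x)$ up to $\|\mathbf{T}\|$ along the maximizing sequence.
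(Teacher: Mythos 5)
Your argument is correct and is essentially the paper's own proof: both start from inequality (2.3), replace $\bigl(\sum_{k}|\langle T_k^2x,x\rangle|^2\bigr)^{1/2}$ by its infimum $c_e(\mathbf{T}^2)$, and then push $\bigl(\sum_{k}\|T_kx\|^2\bigr)^{1/2}$ up to $\|\mathbf{T}\|$. The only cosmetic difference is that the paper bounds that quantity by $\|\mathbf{T}\|$ on the right-hand side and takes a supremum on the left, whereas you divide through by it and pass to a limit along a maximizing sequence; both routes are valid.
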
 

\begin{proof}
Taking $\|x\|=1$ in (\ref{eqnl3}), we get 
\begin{eqnarray*}
  \sum_{k=1}^{d}\|T_kx\|^2+\left(\sum_{k=1}^{d}|\langle T_k^2x,x\rangle|^2\right)^\frac12&\leq&2\sqrt{d}w_e(\mathbf{T})\left( \sum_{k=1}^{d}\|T_kx\|^2\right)^\frac12\\&\leq& 2\sqrt{d}w_e(\mathbf{T})\|\mathbf{T}\|.\end{eqnarray*} 
Hence,	
	\begin{eqnarray*}
  \sum_{k=1}^{d}\|T_kx\|^2&\leq& 2\sqrt{d}w_e(\mathbf{T})\|\mathbf{T}\|-\left(\sum_{k=1}^{d}|\langle T_k^2x,x\rangle|^2\right)^\frac12\\&\leq&2\sqrt{d}w_e(\mathbf{T})\|\mathbf{T}\|-c_e(\mathbf{T^2}).\end{eqnarray*}
  Taking supremum over all $x\in \mathscr{H}$ with $\|x\|=1$,  we get
  $$\|\mathbf{T}\|^2+c_e(\mathbf{T^2})\leq 2\sqrt{d}w_e(\mathbf{T})\|\mathbf{T}\|.$$ 
  This completes the proof.
\end{proof}
  

Now we prove the following inequalities for the joint operator norm of $d$-tuple normal operators. For this purpose we note the well known characterization for normal operator.
An operator $T\in\mathbb{B}(\mathscr{H})$ is normal if and only if $ \|Tx\|=\|T^*x\|$ for all $x\in\mathscr{H}$. 


\begin{theorem} Let $\mathbf{T}=(T_1,T_2, \ldots, T_d) \in {\mathbb{B}^d(\mathscr{H})}$ be a  $d$-tuple normal operator. Then $$\|\mathbf{T^2}\|=\|(T_1^*T_1,T_2^*T_2, \ldots,T_d^*T_d)\|\leq\|\mathbf{T}\|^2=\|\mathbf{T^*}\|^2\leq\sqrt{d}\|\mathbf{T^2}\|.$$
\label{lem5}\end{theorem}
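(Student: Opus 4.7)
The plan is to reduce every norm in the statement to a norm of a sum of powers of the positive operators $A_k := T_k^*T_k$ via Proposition \ref{leml1}, and then to exploit normality and commutativity of the $T_k$'s to derive a clean operator inequality chain
\[
\sum_{k=1}^d A_k^2 \;\leq\; \Big(\sum_{k=1}^d A_k\Big)^2 \;\leq\; d\sum_{k=1}^d A_k^2.
\]
Taking norms then yields the three non-trivial inequalities, and the equalities follow more directly.

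The first step is to promote the commutativity hypothesis $T_iT_j=T_jT_i$ to commutativity that also involves the adjoints. Since each $T_i$ is normal, Fuglede's theorem applied to $T_i$ and to the relation $T_iT_j=T_jT_i$ gives $T_i^*T_j=T_jT_i^*$; interchanging indices gives $T_iT_j^*=T_j^*T_i$; and conjugating gives $T_i^*T_j^*=T_j^*T_i^*$. In particular the positive operators $A_k=T_k^*T_k$ commute pairwise, and using normality $T_k^{*2}T_k^2=(T_k^*)^2T_k^2=T_k^*(T_kT_k^*)T_k=(T_k^*T_k)^2=A_k^2$.

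Now I invoke Proposition \ref{leml1} four times. It gives
$\|\mathbf{T}\|^2=\|\sum A_k\|$, $\|\mathbf{T^*}\|^2=\|\sum T_kT_k^*\|=\|\sum A_k\|$ by normality, $\|\mathbf{T^2}\|^2=\|\sum (T_k^2)^*T_k^2\|=\|\sum A_k^2\|$ by the identity above, and $\|(T_1^*T_1,\ldots,T_d^*T_d)\|^2=\|\sum (T_k^*T_k)^*(T_k^*T_k)\|=\|\sum A_k^2\|$ since each $A_k$ is self-adjoint. These four identities immediately yield the two equalities $\|\mathbf{T^2}\|=\|(T_1^*T_1,\ldots,T_d^*T_d)\|$ and $\|\mathbf{T}\|^2=\|\mathbf{T^*}\|^2$, and reduce the two inequalities to the operator inequalities $\sum A_k^2\leq(\sum A_k)^2\leq d\sum A_k^2$.

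The left inequality is immediate from commutativity: expanding $(\sum A_k)^2=\sum A_k^2+\sum_{j\neq k}A_jA_k$, each cross term $A_jA_k$ is a product of commuting positive operators, hence positive, so the second sum is positive. For the right inequality, from $(A_j-A_k)^2\geq 0$ and $A_jA_k=A_kA_j$, we get $A_j^2+A_k^2\geq 2A_jA_k$. Summing over $j<k$ (each $A_i^2$ appears $d-1$ times on the left) yields $(d-1)\sum A_k^2\geq 2\sum_{j<k}A_jA_k=(\sum A_k)^2-\sum A_k^2$, whence $(\sum A_k)^2\leq d\sum A_k^2$. Taking operator norms and then square roots delivers $\|\mathbf{T}\|^2\leq \sqrt{d}\,\|\mathbf{T^2}\|$.

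The only step that requires thought is the Fuglede input: without it, commutativity of $T_i,T_j$ alone would not give commutativity of the $A_k$, and the expansion $(\sum A_k)^2=\sum A_k^2+\sum_{j\neq k}A_jA_k$ would neither simplify nor have a definite sign. Everything else is bookkeeping using Proposition \ref{leml1} and the scalar-type inequality $\sum a_k^2\leq(\sum a_k)^2\leq d\sum a_k^2$ lifted to commuting positive operators.
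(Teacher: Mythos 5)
Your proof is correct, but it follows a genuinely different route from the paper's. The paper argues pointwise with unit vectors: the two equalities come from $\|T_ky\|=\|T_k^*y\|$ applied to $y=T_kx$ and to $y=x$, the middle inequality from $\|T_k^*T_kx\|\leq\|T_k^*\|\,\|T_kx\|\leq\|\mathbf{T}\|\,\|T_kx\|$, and the last from $\sum_k\langle T_k^*T_kx,x\rangle\leq\sum_k\|T_k^*T_kx\|\leq\sqrt{d}\left(\sum_k\|T_k^*T_kx\|^2\right)^{1/2}$ via Cauchy--Schwarz; notably it never uses commutativity of the tuple, only normality of each $T_k$, so it proves the statement under a weaker hypothesis than joint normality. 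You instead invoke the commuting hypothesis through Fuglede's theorem to make the positive operators $A_k=T_k^*T_k$ pairwise commute, reduce everything via Proposition \ref{leml1} to the operator inequality $\sum_kA_k^2\leq\left(\sum_kA_k\right)^2\leq d\sum_kA_k^2$, and take norms using the $C^*$-identity $\|B^2\|=\|B\|^2$ for self-adjoint $B$. Each step checks out (the cross terms $A_jA_k$ are positive because commuting positives have positive product, and the counting in the right-hand inequality is right), and your route has the virtue of exposing the scalar inequality $\sum a_k^2\leq(\sum a_k)^2\leq d\sum a_k^2$ that underlies the constants; the cost is the reliance on Fuglede and on commutativity, which the paper's more elementary vector-level argument shows to be unnecessary. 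As a small remark, even your middle inequality could avoid commutativity by noting $A_k^2\leq\|A_k\|A_k\leq\|\sum_jA_j\|A_k$.
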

\begin{proof} Let $x\in \mathscr{H}$ with $\|x\|=1$.
Then, we have
\begin{eqnarray*}
\|\mathbf{T^2}\|&=&\|(T_1^2,T_2^2, \ldots,T_d^2)\|
=\underset{\|x\|=1}\sup\left(\sum_{k=1}^{d}\|T_k^2x\|^2\right)^\frac12\\
&=&\underset{\|x\|=1}\sup\left(\sum_{k=1}^{d}\langle T_k^2x, T_k^2x\rangle\right)^\frac12
=\underset{\|x\|=1}\sup\left(\sum_{k=1}^{d}\langle T_kT_kx, T_kT_kx\rangle\right)^\frac12\\
&=&\underset{\|x\|=1}\sup\left(\sum_{k=1}^{d}\langle T_k^*T_kx, T_k^*T_kx\rangle\right)^\frac12\,\,\textit{(since each $T_k$ is normal)}\\
&=&\underset{\|x\|=1}\sup\left(\sum_{k=1}^{d}\| T_k^*T_kx\|^2\right)^\frac12
=\|(T_1^*T_1,T_2^*T_2, \ldots,T_d^*T_d)\|.
\end{eqnarray*}
Now, \begin{eqnarray*}
 \|(T_1^*T_1,T_2^*T_2, \ldots,T_d^*T_d)\|
&=&\underset{\|x\|=1}\sup\left(\sum_{k=1}^{d}\| T_k^*T_kx\|^2\right)^\frac12\\
&\leq&\underset{\|x\|=1}\sup\left(\sum_{k=1}^{d}\| T_k^*\|^2\|T_kx\|^2\right)^\frac12\\ 
&\leq&\underset{\|x\|=1}\sup\left(\sum_{k=1}^{d}\| \mathbf{T}\|^2\|T_kx\|^2\right)^\frac12\\
&&(\textit{since $\|T_kx\|\leq\left(\sum_{k=1}^{d}\|T_kx\|^2\right)^\frac12, $ $\|T_k\|\leq\|\mathbf{T}\|$ for each $k$})\\
&=&\| \mathbf{T}\|\underset{\|x\|=1}\sup\left(\sum_{k=1}^{d}\|T_kx\|^2\right)^\frac12=\| \mathbf{T}\|^2.
\end{eqnarray*}
Also, we have
\begin{eqnarray*}
\| \mathbf{T}\|&=&\underset{\|x\|=1}\sup\left(\sum_{k=1}^{d}\|T_kx\|^2\right)^{1/2}
=\underset{\|x\|=1}\sup\left(\sum_{k=1}^{d}\|T_k^*x\|^2\right)^{1/2}
=\| \mathbf{T^*}\|.
\end{eqnarray*}
Again, \begin{eqnarray*}
\| \mathbf{T}\|^2&=&\underset{\|x\|=1}\sup\left(\sum_{k=1}^{d}\|T_kx\|^2\right)
=\underset{\|x\|=1}\sup\left(\sum_{k=1}^{d}\langle T_kx, T_kx\rangle\right)\\
&=&\underset{\|x\|=1}\sup\left(\sum_{k=1}^{d}\langle T_k^*T_kx, x\rangle\right)
\leq \underset{\|x\|=1}\sup\left(\sum_{k=1}^{d}\| T_k^*T_kx\|\| x\|\right)\\
&\leq&\sqrt{d}\underset{\|x\|=1}\sup\left(\sum_{k=1}^{d}\| T_k^*T_kx\|^2\right)^\frac12\,\,\,(\textit{by Cauchy-Schwarz inequality})\\
&=&\sqrt{d}\underset{\|x\|=1}\sup\left(\sum_{k=1}^{d}\| T_k^2x\|^2\right)^\frac12\,\,(\textit{since each $T_k$ is normal})\\
&=& \sqrt{d}\|\mathbf{T^2}\|.
\end{eqnarray*}
This completes the proof.
\end{proof}

Note that if we take $T_k$ ($k=1,2, \ldots,d$) is a $d\times d$ matrix whose only $(k, k)$ diagonal entries is $1$ and others are zero, then the first inequality in Theorem \ref{lem5} becomes equality.
Also if we take $T_k=\sqrt{d}{I}$ (${I}$ is the $d\times d$ identity matrix) for $k=1,2, \dots,d$, then the second inequality in Theorem \ref{lem5} becomes equality. Thus, the inequalities in Theorem \ref{lem5} are sharp.
  
Now, in the following theorem we develop a power inequality for the joint numerical radius of $d$-tuple operators.

\begin{theorem}
If $\mathbf{T}=(T_1,T_2, \ldots,T_d)\in\mathbb{B}^d(\mathscr{H}),$ then 
$$ w_e(\mathbf{T^n})\leq \sqrt{d} w_e^n(\mathbf{T}).$$
\label{th2}\end{theorem}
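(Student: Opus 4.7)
The plan is to bound the joint numerical radius of $\mathbf{T}^n$ pointwise on unit vectors by combining three ingredients: the definition of $w_e$, the classical Berger power inequality $w(T^n) \leq w(T)^n$ for single operators, and the observation (already used in the proof of Theorem~\ref{lem2}) that each component satisfies $w(T_k) \leq w_e(\mathbf{T})$.

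Concretely, I would start from
\begin{eqnarray*}
w_e(\mathbf{T}^n) = \sup_{\|x\|=1}\left(\sum_{k=1}^{d}|\langle T_k^n x,x\rangle|^2\right)^{1/2}
\end{eqnarray*}
and estimate each term as $|\langle T_k^n x,x\rangle| \leq w(T_k^n) \leq w(T_k)^n$, where the second inequality is Berger's power inequality. Since $|\langle T_k x,x\rangle| \leq \bigl(\sum_{j=1}^d |\langle T_j x,x\rangle|^2\bigr)^{1/2}$ for every unit vector $x$, we have $w(T_k) \leq w_e(\mathbf{T})$ for each $k$, hence $w(T_k)^n \leq w_e^n(\mathbf{T})$.

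Summing these $d$ inequalities gives $\sum_{k=1}^{d}|\langle T_k^n x,x\rangle|^2 \leq d\, w_e^{2n}(\mathbf{T})$, and taking square root followed by the supremum over unit vectors yields
\begin{eqnarray*}
w_e(\mathbf{T}^n) \leq \sqrt{d}\, w_e^n(\mathbf{T}).
\end{eqnarray*}

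There is no real obstacle here: the argument is a direct reduction to the single-operator power inequality together with the elementary componentwise bound $w(T_k) \leq w_e(\mathbf{T})$. The only point worth explicit mention is to invoke the classical Berger power inequality $w(T^n)\leq w(T)^n$, which is a standard fact about the numerical radius on $\mathbb{B}(\mathscr{H})$ and accounts for the whole step from $\mathbf{T}^n$ back down to $\mathbf{T}$.
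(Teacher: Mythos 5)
Your proof is correct and follows essentially the same route as the paper: both arguments rest on the componentwise bound $w(T_k)\leq w_e(\mathbf{T})$ together with the single-operator power inequality. The only cosmetic difference is that you invoke the homogeneous form $w(T^n)\leq w(T)^n$ directly, whereas the paper uses the normalized version ($w(T)\leq 1$ implies $w(T^n)\leq 1$, citing Pearcy) and then rescales by $w_e(\mathbf{T})$ at the end.
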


\begin{proof} Let $x\in \mathscr{H}$ with $\|x\|=1.$
The inequality  $|\langle T_kx,x\rangle|\leq \left(\sum_{k=1}^{d}|\langle T_kx,x\rangle|^2\right)^\frac12$ implies   $ w(T_k)\leq w_e(\mathbf{T})$ for each $k=1,2,\ldots,d$. Thus, if  $w_e(\mathbf{T})\leq1,$ then $ w(T_k)\leq 1$ for each $ k=1,2, \ldots,d.$
The power inequality \cite{CP} implies that $ w(T_k^n)\leq 1$ for each $ k=1,2, \ldots,d,$  whenever  $ w(T_k)\leq 1.$  Therefore, if $w(T_k)\leq 1 $, then
 \begin{eqnarray*}
w_e(\mathbf{T^n})&=&\underset{\|x\|=1}\sup
\left(\sum_{k=1}^{d}|\langle T_k^nx,x\rangle|^2\right)^\frac12
\leq \left(\sum_{k=1}^{d}\underset{\|x\|=1}\sup|\langle T_k^nx,x\rangle|^2\right)^\frac12
\leq \left(\sum_{k=1}^{d}w^2( T_k^n)\right)^\frac12\\
&\leq&\sqrt{d}.
\end{eqnarray*}
Now, if we take $T_k^{'}=\frac{T_k}{w(\mathbf{T})}$ for all $ k=1,2, \ldots,d,$ then $w_e(\mathbf{T^{'}})= 1$, where $\mathbf{T^{'}}=(T_1^{'},T_2^{'}, \ldots,T_d^{'})$, and so $w(T_k^{'})\leq 1.$ 
Thus, $ w_e(\mathbf{(T^{'})^n})\leq \sqrt{d}$, and this gives $  w_e(\mathbf{T^n})\leq \sqrt{d} w_e^n(\mathbf{T}).$

\end{proof}

Applying Theorem \ref{th2}, we derive the following inequality.

\begin{cor}
Let $\mathbf{T}=(T_1,T_2, \ldots,T_d)\in\mathbb{B}^d(\mathscr{H}).$ If $w_e(\mathbf{T})\leq1,$ then $$\|\mathbf{T^n}\|\leq 2d.$$ 
\end{cor}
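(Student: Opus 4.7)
The plan is to combine the two results that immediately precede the corollary: the norm/Euclidean-radius comparison in \eqref{pp00} and the power inequality of Theorem \ref{th2}. Both apply to any $d$-tuple, and the corollary is essentially what one gets by chaining them.

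First I would apply \eqref{pp00} to the $d$-tuple $\mathbf{T^n}=(T_1^n,T_2^n,\ldots,T_d^n)\in \mathbb{B}^d(\mathscr{H})$ rather than to $\mathbf{T}$ itself. This gives the upper bound $\|\mathbf{T^n}\|\le 2\sqrt{d}\, w_e(\mathbf{T^n})$. Next I would feed in Theorem \ref{th2}, which says $w_e(\mathbf{T^n})\le \sqrt{d}\, w_e^n(\mathbf{T})$. Multiplying the two constants yields
\[
\|\mathbf{T^n}\|\le 2\sqrt{d}\cdot \sqrt{d}\, w_e^n(\mathbf{T}) = 2d\, w_e^n(\mathbf{T}).
\]
Finally, under the hypothesis $w_e(\mathbf{T})\le 1$ one has $w_e^n(\mathbf{T})\le 1$, so $\|\mathbf{T^n}\|\le 2d$, as required.

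There is no real obstacle here; the statement is a direct corollary obtained by substituting $\mathbf{T^n}$ into \eqref{pp00} and then bounding $w_e(\mathbf{T^n})$ via Theorem \ref{th2}. The only minor point worth noting is that one must verify $\mathbf{T^n}$ is indeed a legitimate $d$-tuple in $\mathbb{B}^d(\mathscr{H})$ to which \eqref{pp00} applies (it is, by the componentwise definition of products of $d$-tuples given in the introduction), and that the constant $2\sqrt{d}$ appearing in \eqref{pp00} is what one inverts from $\tfrac{1}{2\sqrt{d}}\|\mathbf{T}\|\le w_e(\mathbf{T})$.
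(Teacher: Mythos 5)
Your argument is correct and is exactly the paper's proof: apply the lower bound $\frac{1}{2\sqrt{d}}\|\mathbf{T^n}\|\leq w_e(\mathbf{T^n})$ from \eqref{pp00} to the tuple $\mathbf{T^n}$, then bound $w_e(\mathbf{T^n})\leq \sqrt{d}\,w_e^n(\mathbf{T})\leq\sqrt{d}$ via Theorem \ref{th2}. Nothing further is needed.
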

\begin{proof}
It follows from the inequality \eqref{pp00} and together with Theorem \ref{th2} that $$ \frac{\|\mathbf{T^n}\|}{2\sqrt{d}}\leq w_e(\mathbf{T^n})\leq \sqrt{d}w_e^n(\mathbf{T})\leq \sqrt{d}.$$
\end{proof}

Now, we obtain the joint numerical radius inequalities for the product of $d$-tuple operators. For this purpose we need the following lemma
in which we prove that the joint operator norm is submultiplicative and the joint numerical radius is subadditive. Though subadditive property of $w_e(\cdot)$ is known, for the convenience of reader we discuss the following proof.

\begin{lemma}
Let $\mathbf{S}=(S_1,S_2, \ldots,S_d),$ $\mathbf{T}=(T_1,T_2, \ldots,T_d) \in \mathbb{B}^d(\mathscr{H}).$ Then the following inequalities hold:  \\$(a) \, \|\mathbf{ST}\|\leq \|\mathbf{S}\|\|\mathbf{T}\|.$ \\
$(b)\, w_e(\mathbf{S}+\mathbf{T})\leq w_e(\mathbf{S})+w_e(\mathbf{T}).$ 
\label{lem6}\end{lemma}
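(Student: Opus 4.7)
The plan is to reduce both inequalities to applying standard tools (Cauchy--Schwarz/triangle for $\ell^2$ sums, and the trivial bound $\|T_k\|\le \|\mathbf{T}\|$) directly to the definitions in Proposition~\ref{leml1} and the definition of $w_e$.

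For part (a), I would start from Proposition~\ref{leml1} (or equivalently from the definition) and write
\[
\|\mathbf{ST}\|^2 = \sup_{\|x\|=1}\sum_{k=1}^d \|S_kT_kx\|^2 \le \sup_{\|x\|=1}\sum_{k=1}^d \|S_k\|^2\|T_kx\|^2.
\]
The key observation is that for every $k$, $\|S_k x\|^2 \le \sum_{j=1}^d \|S_j x\|^2$ taking sup over unit $x$ gives $\|S_k\|\le \|\mathbf{S}\|$. Pulling $\|\mathbf{S}\|^2$ out of the sum yields
\[
\|\mathbf{ST}\|^2 \le \|\mathbf{S}\|^2 \sup_{\|x\|=1}\sum_{k=1}^d \|T_kx\|^2 = \|\mathbf{S}\|^2\|\mathbf{T}\|^2,
\]
and taking square roots finishes (a).

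For part (b), the inequality is an immediate consequence of the triangle inequality in $\ell^2(\mathbb{C}^d)$. For a unit vector $x\in\mathscr{H}$,
\[
\Bigl(\sum_{k=1}^d |\langle (S_k+T_k)x,x\rangle|^2\Bigr)^{1/2} \le \Bigl(\sum_{k=1}^d |\langle S_kx,x\rangle|^2\Bigr)^{1/2} + \Bigl(\sum_{k=1}^d |\langle T_kx,x\rangle|^2\Bigr)^{1/2},
\]
and each term on the right is bounded by $w_e(\mathbf{S})$ and $w_e(\mathbf{T})$ respectively; taking the supremum over unit $x$ completes the proof.

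Neither part should present a real obstacle---the main conceptual point is just to notice that $\|T_k\|\le \|\mathbf{T}\|$ (which follows because the $k$-th term is dominated by the whole $\ell^2$ sum in the definition of $\|\mathbf{T}\|$) and to apply the Minkowski inequality entrywise in (b). I would present the two parts as short displays with one line of justification each, and note explicitly that (b) is the standard Minkowski/triangle inequality applied to the $d$-tuple $(\langle S_kx,x\rangle)_{k=1}^d$ and $(\langle T_kx,x\rangle)_{k=1}^d$.
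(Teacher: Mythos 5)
Your proposal is correct and follows essentially the same route as the paper: for (a) you bound $\|S_kT_kx\|\le\|S_k\|\,\|T_kx\|$, use $\|S_k\|\le\|\mathbf{S}\|$ (justified exactly as in the paper by dominating the $k$-th term by the full $\ell^2$ sum), and factor out $\|\mathbf{S}\|^2$; for (b) you apply the Minkowski inequality to the tuples $(\langle S_kx,x\rangle)_k$ and $(\langle T_kx,x\rangle)_k$ and then take suprema, which is precisely the paper's argument. No gaps.
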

\begin{proof}
Let $x\in \mathscr{H}$ with $\|x\|=1$. Then, we have
\begin{eqnarray*}
\|\mathbf{ST}\|&=&\underset{\|x\|=1}\sup\left(\sum_{k=1}^{d} \|S_kT_kx\|^2\right)^\frac12
\leq \underset{\|x\|=1}\sup\left(\sum_{k=1}^{d} \|S_k\|^2\|T_kx\|^2\right)^\frac12\\
&\leq&\underset{\|x\|=1}\sup\left(\sum_{k=1}^{d} \|\mathbf{S}\|^2\|T_kx\|^2\right)^\frac12\\
&&(\textit{since $\|S_kx\|\leq\left(\sum_{k=1}^{d}\|S_kx\|^2\right)^\frac12 $, $\|S_k\|\leq\|\mathbf{S}\|$ holds for each $k$})\\
&=&\|\mathbf{S}\|\underset{\|x\|=1}\sup\left(\sum_{k=1}^{d} \|T_kx\|^2\right)^\frac12
=\|\mathbf{S}\|\|\mathbf{T}\|.
\end{eqnarray*}

Also, we have
\begin{eqnarray*}
w_e(\mathbf{S}+\mathbf{T})&=& \underset{\|x\|=1}\sup\left(\sum_{k=1}^{d} |\langle (S_k+T_k)x, x\rangle|^2\right)^\frac12\\
&=& \underset{\|x\|=1}\sup\left(\sum_{k=1}^{d} |\langle S_k x, x\rangle+\langle T_k x,x\rangle|^2\right)^\frac12\\
&\leq& \underset{\|x\|=1}\sup\left\lbrace\left(\sum_{k=1}^{d} |\langle S_k x, x\rangle|^2\right)^\frac12+\left(\sum_{k=1}^{d} |\langle T_k x,x\rangle|^2\right)^\frac12\right\rbrace
\\&&\,\,\,\,\,\,(\textit{using Minkowski inequality})\\
&\leq& \underset{\|x\|=1}\sup\left(\sum_{k=1}^{d} |\langle S_k x, x\rangle|^2\right)^\frac12+\underset{\|x\|=1}\sup\left(\sum_{k=1}^{d} |\langle T_k x,x\rangle|^2\right)^\frac12\\
&=& w_e(\mathbf{S})+w_e(\mathbf{T}).
\end{eqnarray*}
\end{proof}


\begin{theorem}
Let $\mathbf{S}=(S_1,S_2, \ldots,S_d)$,  $\mathbf{T}=(T_1,T_2, \ldots,T_d)\in \mathbb{B}^d(\mathscr{H}),$ then $$ w_e(\mathbf{S}\mathbf{T})\leq 4d w_e(\mathbf{S})w_e(\mathbf{T}).$$
\end{theorem}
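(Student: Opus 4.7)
The plan is to prove this inequality by a short chain of previously established bounds, with no new technical ingredient needed. The key observation is that all the pieces are already in place.

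First, I would apply the right-hand side of \eqref{pp00} to the product tuple $\mathbf{ST}$:
$$w_e(\mathbf{ST}) \le \|\mathbf{ST}\|.$$
Next, I would invoke the submultiplicativity of the joint operator norm, which is part (a) of Lemma \ref{lem6}, giving
$$\|\mathbf{ST}\| \le \|\mathbf{S}\|\,\|\mathbf{T}\|.$$
Finally, I would apply the left-hand side of \eqref{pp00} (in the form $\|\mathbf{X}\| \le 2\sqrt{d}\, w_e(\mathbf{X})$) separately to $\mathbf{S}$ and $\mathbf{T}$:
$$\|\mathbf{S}\|\,\|\mathbf{T}\| \le (2\sqrt{d}\, w_e(\mathbf{S}))(2\sqrt{d}\, w_e(\mathbf{T})) = 4d\, w_e(\mathbf{S}) w_e(\mathbf{T}).$$
Concatenating these three inequalities gives exactly the claim $w_e(\mathbf{ST}) \le 4d\, w_e(\mathbf{S})w_e(\mathbf{T})$.

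There is essentially no obstacle here: every inequality used has already been proved in the paper (namely \eqref{pp00}, which rests on Proposition \ref{leml1} and \eqref{T1}, and Lemma \ref{lem6}(a)). The factor $4d$ appears naturally as $(2\sqrt{d})^2$, coming from two applications of the sharp constant $\frac{1}{2\sqrt{d}}$ that links $w_e$ to $\|\cdot\|$. The proof is therefore a one-line assembly, and I would simply present the three-step chain of inequalities in a single display, citing \eqref{pp00} and Lemma \ref{lem6}(a).
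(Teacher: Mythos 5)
Your proposal is correct and follows exactly the same three-step chain as the paper: $w_e(\mathbf{ST})\leq\|\mathbf{ST}\|\leq\|\mathbf{S}\|\|\mathbf{T}\|\leq 4d\,w_e(\mathbf{S})w_e(\mathbf{T})$, citing \eqref{pp00} and Lemma \ref{lem6}(a) in the same way. No differences to report.
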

\begin{proof}
We have, $w_e(\mathbf{S}\mathbf{T})\leq \|\mathbf{S}\mathbf{T}\|\leq\|\mathbf{S}\|\|\mathbf{T}\|\leq 4d w_e(\mathbf{S})w_e(\mathbf{T}),$ where the second inequality is derived from Lemma \ref{lem6} (a) and the third inequality is derived from (\ref{pp00}).
\end{proof}

Further, we develop a joint numerical radius inequality for the product of two $d$-tuple operators $\mathbf{S}$ and $\mathbf{T}$ when $\mathbf{S}\mathbf{T}=\mathbf{T}\mathbf{S}$.

\begin{theorem}
Let $\mathbf{S}=(S_1,S_2, \ldots,S_d)$,  $\mathbf{T}=(T_1,T_2, \ldots,T_d)\in \mathbb{B}^d(\mathscr{H}).$ If $\mathbf{S}\mathbf{T}=\mathbf{T}\mathbf{S}$ (i.e, $S_kT_k=T_kS_k$ for all $k=1,2, \ldots,d$), then $$ w_e(\mathbf{S}\mathbf{T})\leq 2\sqrt{d} w_e(\mathbf{S})w_e(\mathbf{T}).$$
\end{theorem}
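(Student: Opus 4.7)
The proof plan is to reduce the tuple inequality to a componentwise numerical radius inequality, exactly parallel to the proof of Theorem 2.4 (the power inequality), but using the classical numerical radius product inequality for commuting pairs in place of the power inequality.

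First I would expand the definition of the Euclidean operator radius and pull the supremum inside the sum:
\begin{eqnarray*}
w_e^2(\mathbf{S}\mathbf{T})
= \sup_{\|x\|=1}\sum_{k=1}^{d}|\langle S_kT_kx,x\rangle|^2
\leq \sum_{k=1}^{d}\sup_{\|x\|=1}|\langle S_kT_kx,x\rangle|^2
= \sum_{k=1}^{d}w^2(S_kT_k).
\end{eqnarray*}
This reduces the problem to bounding each scalar $w(S_kT_k)$ from above by $w(S_k)$, $w(T_k)$ and then folding these back into $w_e(\mathbf{S})$ and $w_e(\mathbf{T})$.

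Next, for each fixed $k$ the hypothesis gives $S_kT_k=T_kS_k$. Here I would invoke the classical Holbrook/Ando inequality stating that for any two commuting bounded operators $A,B$ one has $w(AB)\leq 2\,w(A)\,w(B)$. Applied to each pair $(S_k,T_k)$ this yields $w(S_kT_k)\leq 2w(S_k)w(T_k)$, and substituting into the displayed bound gives
\begin{eqnarray*}
w_e^2(\mathbf{S}\mathbf{T})\leq 4\sum_{k=1}^{d}w^2(S_k)w^2(T_k).
\end{eqnarray*}

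Finally I would use the elementary fact, already exploited in the proof of Theorem \ref{th2}, that $w(S_k)\leq w_e(\mathbf{S})$ and $w(T_k)\leq w_e(\mathbf{T})$ for every $k$. Bounding one factor pointwise by $w_e(\mathbf{S})^2$ (say) and using $w(T_k)\leq w_e(\mathbf{T})$ to estimate $\sum_{k=1}^{d}w^2(T_k)\leq d\,w_e^2(\mathbf{T})$, I obtain
\begin{eqnarray*}
w_e^2(\mathbf{S}\mathbf{T})\leq 4w_e^2(\mathbf{S})\sum_{k=1}^{d}w^2(T_k)\leq 4d\,w_e^2(\mathbf{S})\,w_e^2(\mathbf{T}),
\end{eqnarray*}
and taking square roots gives the claimed inequality $w_e(\mathbf{S}\mathbf{T})\leq 2\sqrt{d}\,w_e(\mathbf{S})\,w_e(\mathbf{T})$.

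The only genuinely nontrivial ingredient is the commuting-pair bound $w(AB)\leq 2w(A)w(B)$, which is where the factor of $2$ (rather than the generic $4$ from the previous theorem) comes from; everything else is the same $w(T_k)\leq w_e(\mathbf{T})$ trick plus summation over $k$, so this is the step I expect to be the main pressure point — though it is a known result and not something that needs to be reproved here.
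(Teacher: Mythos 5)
Your proof is correct, and it takes a genuinely different route from the paper's. The paper never descends to the individual components for this theorem: it normalizes $w_e(\mathbf{S})=w_e(\mathbf{T})=1$, writes $\mathbf{S}\mathbf{T}=\tfrac14(\mathbf{S}+\mathbf{T})^2-\tfrac14(\mathbf{S}-\mathbf{T})^2$ (valid precisely because each pair $S_k,T_k$ commutes), and then applies its own tuple-level subadditivity (Lemma \ref{lem6}(b)) and the tuple power inequality $w_e(\mathbf{T}^2)\leq\sqrt{d}\,w_e^2(\mathbf{T})$ (Theorem \ref{th2}) to get $w_e(\mathbf{S}\mathbf{T})\leq\frac{\sqrt d}{4}\cdot 4+\frac{\sqrt d}{4}\cdot 4=2\sqrt d$. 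You instead drop to components immediately via $w_e^2(\mathbf{S}\mathbf{T})\leq\sum_k w^2(S_kT_k)$, invoke the scalar commuting-product bound $w(AB)\leq 2\,w(A)w(B)$, and recover the factor $d$ from $\sum_k w^2(T_k)\leq d\,w_e^2(\mathbf{T})$; all of these steps are sound, and the constants match. The two arguments are close cousins: the scalar inequality you cite is itself proved by exactly the polarization-plus-Berger-power-inequality device that the paper runs at the tuple level, so in effect you have outsourced the paper's main manoeuvre to a classical reference (it appears in standard texts such as the cited Gustafson--Rao book; the attribution to the Holbrook circle is essentially right, and the constant $2$ for commuting pairs is known to be the best general one currently available, since the constant cannot be reduced to $1$ by examples of M\"uller and Davidson--Holbrook). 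What the paper's version buys is self-containedness within its own framework of tuple lemmas; what yours buys is modularity and a clear isolation of where the factor $2$ versus the generic $4$ comes from. Your step of pulling the supremum inside the sum is also exactly the source of the $\sqrt d$ loss in the paper's Theorem \ref{th2}, so the two proofs pay the dimensional price at the same conceptual place.
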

\begin{proof}
Suppose $w_e(\mathbf{S})=w_e(\mathbf{T})=1$. Then, we have
 \begin{eqnarray*}
w_e(\mathbf{S}\mathbf{T})&=&w_e\left(\frac14(\mathbf{S}+\mathbf{T})^2-\frac14(\mathbf{S}-\mathbf{T})^2\right)\\
&\leq&\frac14 w_e \left( (\mathbf{S}+\mathbf{T})^2 \right)+\frac14 w_e\left ( (\mathbf{S}-\mathbf{T})^2\right)
\\ && (\textit{using Lemma \ref{lem6} (b) and the fact $w_e(c\mathbf{T})=|c|w_e(\mathbf{T})$})\\
&\leq&\frac{\sqrt{d}}{4} w_e^2(\mathbf{S}+\mathbf{T})+\frac{\sqrt{d}}{4} w_e^2(\mathbf{S}-\mathbf{T})\,\,(\textit{using Theorem \ref{th2}})\\
&\leq&\frac{\sqrt{d}}{4} \left( w_e(\mathbf{S})+w_e(\mathbf{T})\right)^2+\frac{\sqrt{d}}{4} \left( w_e(\mathbf{S})+w_e(\mathbf{T})\right)^2\,\,(\textit{using Lemma \ref{lem6} (b)} )\\
&=& 2\sqrt{d}.
\end{eqnarray*}
This completes the proof.
\end{proof}

Next bound for the product of two $d$-tuple normal operators reads as follows.  

\begin{theorem}
Let $\mathbf{S}=(S_1,S_2, \ldots,S_d)$,  $\mathbf{T}=(T_1,T_2, \ldots,T_d)\in \mathbb{B}^d(\mathscr{H}).$ If $\mathbf{S}$, $\mathbf{T}$ are normal, then  $$ w_e(\mathbf{S}\mathbf{T})\leq  w_e(\mathbf{S})w_e(\mathbf{T}).$$
\end{theorem}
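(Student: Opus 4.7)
The strategy rests on the observation that for a joint normal tuple $\mathbf{R}$, the right-hand bound in \eqref{pp00} is sharp: $w_e(\mathbf{R}) = \|\mathbf{R}\|$. Granting this, a coordinate-wise Cauchy--Schwarz together with $\|R_k\| \le \|\mathbf{R}\|$ (immediate from Proposition \ref{leml1} and positivity of $R_k^*R_k$) yields $w_e(\mathbf{S}\mathbf{T}) \le \|\mathbf{S}\|\|\mathbf{T}\|$ with no further hypothesis, and the joint normality of $\mathbf{S}$ and $\mathbf{T}$ then converts the right-hand side into $w_e(\mathbf{S})\,w_e(\mathbf{T})$.

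The key lemma I would establish first is: if $\mathbf{R}$ is joint normal, then $w_e(\mathbf{R}) = \|\mathbf{R}\|$. The bound $w_e(\mathbf{R}) \le \|\mathbf{R}\|$ is already in \eqref{pp00}. For the reverse, the joint spectral theorem for commuting normals $\{R_k\}$ supplies a spectral measure $E$ on the joint spectrum $\sigma(\mathbf{R}) \subset \mathbb{C}^d$ with $R_k = \int z_k \, dE(z)$, so by Proposition \ref{leml1},
\[ \|\mathbf{R}\|^2 = \Big\|\sum_{k=1}^d R_k^* R_k\Big\| = \sup_{z \in \sigma(\mathbf{R})} \sum_{k=1}^d |z_k|^2.\]
Each $z^{(0)} \in \sigma(\mathbf{R})$ is a joint approximate eigenvalue of commuting normals, so there are unit vectors $x_n$ with $R_k x_n - z_k^{(0)} x_n \to 0$ for every $k$; then $\langle R_k x_n, x_n\rangle \to z_k^{(0)}$ and $w_e(\mathbf{R})^2 \ge \sum_k |z_k^{(0)}|^2$. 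Supping over $z^{(0)}$ closes the lemma.

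For the main estimate, fix a unit $x \in \mathscr{H}$. For each $k$,
\[ |\langle S_k T_k x, x\rangle| \le \|S_k T_k x\| \le \|S_k\|\|T_k x\| \le \|\mathbf{S}\|\|T_k x\|.\]
Squaring, summing over $k$, and invoking Proposition \ref{leml1} once more,
\[ w_e(\mathbf{S}\mathbf{T})^2 \le \|\mathbf{S}\|^2 \sup_{\|x\|=1}\sum_{k=1}^d \|T_k x\|^2 = \|\mathbf{S}\|^2\|\mathbf{T}\|^2.\]
Applying the key lemma to the joint normal tuples $\mathbf{S}$ and $\mathbf{T}$ replaces each joint norm by the corresponding Euclidean operator radius, giving the desired inequality.

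The main obstacle is the reverse direction $\|\mathbf{R}\| \le w_e(\mathbf{R})$ of the lemma: it genuinely uses that commuting normals admit a joint spectral decomposition whose points are joint approximate eigenvalues, so that the supremum over joint spectral points (to which Proposition \ref{leml1} reduces $\|\mathbf{R}\|$) can be realized as a supremum over unit vectors. An alternative route would appeal to Gelfand--Naimark for the commutative $C^*$-algebra generated by $\{R_k, R_k^*\}$ (with Fuglede's theorem ensuring commutativity) and extend pure states of this algebra to vector states of $\mathbb{B}(\mathscr{H})$; either way, some spectral content seems unavoidable.
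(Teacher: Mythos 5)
Your proof is correct and takes essentially the same route as the paper: both arguments reduce the claim to $w_e(\mathbf{S}\mathbf{T})\le\|\mathbf{S}\|\,\|\mathbf{T}\|$ (your coordinate-wise Cauchy--Schwarz estimate is exactly the paper's chain $w_e(\mathbf{S}\mathbf{T})\le\|\mathbf{S}\mathbf{T}\|\le\|\mathbf{S}\|\,\|\mathbf{T}\|$ unrolled) and then replace the joint norms by Euclidean operator radii using $w_e=\|\cdot\|$ on joint normal tuples. The only difference is that the paper simply cites \cite{ct} for that last equality, whereas you supply a correct spectral-theoretic proof of it.
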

\begin{proof}
	We have
$w_e(\mathbf{S}\mathbf{T})\leq \|\mathbf{S}\mathbf{T}\|\leq\|\mathbf{S}\|\|\mathbf{T}\|=w_e(\mathbf{S})w_e(\mathbf{T}),$
where the last equality follows from   $\|\mathbf{T}\|=w_e(\mathbf{T})$ and $\|\mathbf{S}\|=w_e(\mathbf{S})$, as $\mathbf{T}$, $\mathbf{S}$ both are normal (see \cite{ct}).
\end{proof}

We end this section with the following theorem on joint spectral radius and joint numerical radius.
First we note the following arguments. Following \cite{ct},
the joint approximate point spectrum of a $d$-tuple operator $\mathbf{T}$, denoted by $\sigma_{\pi}(\mathbf{T}),$ is defined  as $$\sigma_{\pi}(\mathbf{T})=\left\lbrace \left(\lambda_1,\lambda_2,\ldots,\lambda_d \right)\in\mathbb{C}^d:\exists \, (x_n)\subseteq\mathscr{H};\|x_n\|=1, \hspace{0.2cm} \lim_{n\rightarrow\infty}\sum_{k=1}^{d}\|(T_k-\lambda_k I) x_n\|=0\right\rbrace .$$
Clearly, this is equivalent to the existence of $(x_n)\subseteq \mathscr{H}$ with  $\|x_n\|=1$ such that $\underset{n\rightarrow\infty}\lim \|(T_k-\lambda_kI)x_n\|=0$ for all $k=1,2, \ldots,d$. For a commuting $d$-tuple operator $\mathbf{T}=(T_1,T_2, \ldots, T_d),$ $\sigma(\mathbf{T})$ denotes the joint spectrum of $\mathbf{T},$ see \cite{D}. It satisfies $\sigma_{\pi}(T)\subseteq \sigma(T)$.
	For a $d$-tuple commuting operator  $\mathbf{T}=(T_1,T_2,\ldots,T_d)\in\mathbb{B}^d(\mathscr{H})$, the non-negative number
	$ r(\mathbf{T})=\sup\left\lbrace \left( \sum_{k=1}^{d} |z_k|^2 \right)^\frac{1}{2}:  (z_1,z_2,\ldots,z_d)\in\sigma(\mathbf{T})\right\rbrace$
	is called joint spectral radius of $ \mathbf{T}$.
For $d$-tuple commuting operator  $\mathbf{T}=(T_1,T_2,\ldots,T_d)\in\mathbb{B}^d(\mathscr{H})$, the inequality $r(\mathbf{T})\leq w_e(\mathbf{T})$ holds.
However, in  \cite{ct}, it is proved that  $r(\mathbf{T})=w_e(\mathbf{T})=\|\mathbf{T}\|$ for $d$-tuple normal operator $\mathbf{T}$.  The

\begin{theorem}\label{jointspectral}
Let $\mathbf{T}=(T_1,T_2, \ldots,T_d)\in\mathbb{B}^d(\mathscr{H})$ be commuting. Then, the following statements are equivalent:\\
(a)  $ r(\mathbf{T})=\|\mathbf{T}\|$.\\
(b)  $w_e(\mathbf{T})=\|\mathbf{T}\|$.
\end{theorem}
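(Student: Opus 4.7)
The backbone is the chain $r(\mathbf{T}) \leq w_e(\mathbf{T}) \leq \|\mathbf{T}\|$ recorded in the paragraph preceding the theorem (the first inequality needing commutativity, the second coming from \eqref{pp00}). Pinching this chain immediately gives (a) $\Rightarrow$ (b): if $r(\mathbf{T}) = \|\mathbf{T}\|$, then $w_e(\mathbf{T})$ is sandwiched between two equal quantities. So the real content lies in (b) $\Rightarrow$ (a), and my strategy there is to manufacture an approximate joint eigentuple of Euclidean length $\|\mathbf{T}\|$, which will then witness $r(\mathbf{T}) \geq \|\mathbf{T}\|$ via $\sigma_\pi(\mathbf{T}) \subseteq \sigma(\mathbf{T})$.

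Concretely, assuming $w_e(\mathbf{T}) = \|\mathbf{T}\|$, pick unit vectors $x_n \in \mathscr{H}$ witnessing the supremum, so that $\sum_{k=1}^d |\langle T_k x_n, x_n\rangle|^2 \to \|\mathbf{T}\|^2$. The workhorse is the sandwich
\begin{equation*}
\sum_{k=1}^d |\langle T_k x_n, x_n\rangle|^2 \;\leq\; \sum_{k=1}^d \|T_k x_n\|^2 \;\leq\; \|\mathbf{T}\|^2,
\end{equation*}
the first step being Cauchy--Schwarz on each term and the second being the definition of the joint operator norm. Both outer quantities converge to $\|\mathbf{T}\|^2$, forcing each nonnegative defect $\|T_k x_n\|^2 - |\langle T_k x_n, x_n\rangle|^2$ to tend to $0$. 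Setting $\lambda_{k,n} := \langle T_k x_n, x_n\rangle$ and decomposing $T_k x_n = \lambda_{k,n} x_n + y_{k,n}$ with $y_{k,n} \perp x_n$, the Pythagorean identity $\|T_k x_n\|^2 = |\lambda_{k,n}|^2 + \|y_{k,n}\|^2$ forces $\|(T_k - \lambda_{k,n} I) x_n\| \to 0$ for every $k$ along the \emph{same} sequence $(x_n)$.

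Since $|\lambda_{k,n}| \leq \|T_k\| \leq \|\mathbf{T}\|$, all $d$ sequences $(\lambda_{k,n})_n$ are bounded, so by successive passage to subsequences I obtain $(n_j)$ and complex numbers $z_1,\ldots,z_d$ with $\lambda_{k,n_j} \to z_k$ for every $k$. A triangle inequality then gives $\|(T_k - z_k I)x_{n_j}\| \to 0$ for all $k$ simultaneously, so by the definition of the joint approximate point spectrum $(z_1,\ldots,z_d) \in \sigma_\pi(\mathbf{T}) \subseteq \sigma(\mathbf{T})$. Finally
\begin{equation*}
r(\mathbf{T}) \;\geq\; \Big( \sum_{k=1}^d |z_k|^2\Big)^{1/2} \;=\; \lim_{j \to \infty} \Big( \sum_{k=1}^d |\lambda_{k,n_j}|^2 \Big)^{1/2} \;=\; w_e(\mathbf{T}) \;=\; \|\mathbf{T}\|,
\end{equation*}
which combined with $r(\mathbf{T}) \leq \|\mathbf{T}\|$ yields (a). The main technical hurdle is the sandwich step: it is the only place the hypothesis $w_e(\mathbf{T}) = \|\mathbf{T}\|$ genuinely enters, and it is what simultaneously produces $d$ approximate eigenvalue relations along one common sequence; the rest is routine compactness plus the inclusion $\sigma_\pi(\mathbf{T}) \subseteq \sigma(\mathbf{T})$.
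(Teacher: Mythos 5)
Your proposal is correct and follows essentially the same route as the paper: the forward direction is the same sandwich $r(\mathbf{T})\leq w_e(\mathbf{T})\leq\|\mathbf{T}\|$, and for (b)$\Rightarrow$(a) both arguments take a maximizing sequence $(x_n)$, show $\|(T_k-\lambda_k I)x_n\|\to 0$ for all $k$ along a common (sub)sequence, and conclude via $\sigma_\pi(\mathbf{T})\subseteq\sigma(\mathbf{T})$. Your Pythagorean/squeeze formulation is just a cosmetic reordering of the paper's expansion of $\sum_k\|(T_k-\lambda_k I)x_n\|^2$ into a quantity converging to $b^2-\|\mathbf{T}\|^2\leq 0$.
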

\begin{proof}
$(a)\implies(b)$  Let $ r(\mathbf{T})=\|\mathbf{T}\|$.
It easily follows from $ r(\mathbf{T})\leq w_e(\mathbf{T})\leq\|\mathbf{T}\|$ that $w_e(\mathbf{T})=\|\mathbf{T}\|$.
 
$(b)\implies(a)$  Let $w_e(\mathbf{T})=\|\mathbf{T}\|$. Then there exists a sequence $(x_n)\subseteq\mathscr{H}$ with $\|x_n\|=1$ such that $\underset{n\rightarrow\infty}\lim\|(\langle T_1x_n,x_n\rangle,\langle T_2x_n,x_n\rangle, \ldots,\langle T_dx_n,x_n\rangle)\|=\underset{n\rightarrow\infty}\lim \left(\sum_{k=1}^{d}|\langle T_kx_n,x_n\rangle|^2\right)^\frac12 $ $=\|\mathbf{T}\|$.
Without loss of generality assume that  $(\langle T_1x_n,x_n\rangle,\langle T_2x_n,x_n\rangle, \ldots,\langle T_dx_n,x_n\rangle)$  converges to $\lambda=( \lambda_1,\lambda_2, \ldots,\lambda_d)$ and the sequence $\left(\sum_{k=1}^{d}\|T_kx_n\|^2\right)^\frac12 $ converges to $b.$ Then $\|\lambda\|=\|\mathbf{T}\|$. Now,
\begin{eqnarray*}
\sum_{k=1}^{d}\|(T_k-\lambda_k I )x_n\|^2&=&\sum_{k=1}^{d} \|T_k x_n\|^2 +\sum_{k=1}^{d} |\lambda_k|^2-2 Re  \left( \sum_{k=1}^{d}\bar{\lambda_k}\langle T_k x_n, x_n\rangle \right)\\
&\to&  b^2+\|\mathbf{T}\|^2-2\|\lambda\|^2\\
&=& b^2-\|\mathbf{T}\|^2\leq 0. 
\end{eqnarray*}
Hence, $\sum_{k=1}^{d}\|(T_k-\lambda_k{I})x_n\|^2\rightarrow 0$, and so
 $\lambda=( \lambda_1,\lambda_2, \ldots,\lambda_d)\in\sigma_{\pi}(\mathbf{T}).$
This implies $ w_e(\mathbf{T})\leq  r(\mathbf{T})$. 
Hence, $ w_e(\mathbf{T})= r(\mathbf{T})=\|\mathbf{T}\|.$
\end{proof}

\section{Joint numerical radius of $2\times 2$ operator matrices}
\noindent 
We begin this section with proving the following lemma.

  \begin{lemma}
  Let $\mathbf{X}=(X_1,X_2, \ldots,X_d), \mathbf{Y}=(Y_1,Y_2, \ldots,Y_d) \in\mathbb{B}^d(\mathscr{H}).$ Then the following results hold:
  \begin{eqnarray*}
&&(a)\hspace{.2cm}  w_e\left(\begin{bmatrix}
  \mathbf{X} &  0\\
  0 & \mathbf{Y}
\end{bmatrix}\right)=\max\left\lbrace w_e(\mathbf{X}),w_e(\mathbf{Y})\right\rbrace.\\
&&(b)\hspace{.2cm}  \norm{ \begin{bmatrix}
  \mathbf{X} &  0\\
  0 & \mathbf{Y}
\end{bmatrix}} =\max\left\lbrace \|\mathbf{X}\|,\|\mathbf{Y}\|\right\rbrace.\\
&&(c)\hspace{.2cm}  w_e\left(\begin{bmatrix}
     0 & \mathbf{X}\\
  \mathbf{Y} &  0 
\end{bmatrix}\right)=w_e\left(\begin{bmatrix}
 0 & \mathbf{Y} \\
  \mathbf{X} & 0
\end{bmatrix}\right).\\
&&(d)\hspace{.2cm}  w_e\left(\begin{bmatrix}
     0 & \mathbf{X}\\
  \mathbf{Y} &  0 
\end{bmatrix}\right)=w_e\left(\begin{bmatrix}
   0  & \mathbf{X}\\
   \mathbf{e^{i\theta}Y} & 0 
\end{bmatrix}\right)\,\,\,\textit{for all $\theta\in\mathbb{R}$}.\\
&&(e)\hspace{.2cm}  w_e\left(\begin{bmatrix}
  \mathbf{X} &   \mathbf{Y}\\
   \mathbf{Y} & \mathbf{X}
\end{bmatrix}\right)=\max\left\lbrace w_e(\mathbf{X-Y}),w_e(\mathbf{X+Y})\right\rbrace.\\ &&\,\,\,\,\,\,\,\,\,\,\,\,\,\,\, \textit{In particular}, \,\, w_e\left(\begin{bmatrix}
	0 &   \mathbf{Y}\\
	\mathbf{Y} & 0
\end{bmatrix}\right)=w_e(\mathbf{Y}).
\end{eqnarray*} 
 \label{lem1}\end{lemma}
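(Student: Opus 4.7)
I would prove all five parts using two elementary invariances of $w_e$: (i) if $U\in\mathbb{B}(\mathscr{H}\oplus\mathscr{H})$ is unitary and $\mathbf{T}=(T_1,\ldots,T_d)$, then $w_e(U^{*}\mathbf{T}U)=w_e(\mathbf{T})$ (conjugation componentwise), since $\langle U^{*}T_kUx,x\rangle=\langle T_kUx,Ux\rangle$ and $U$ permutes the unit sphere; and (ii) for any unimodular scalar $c$, $w_e(c\mathbf{T})=w_e(\mathbf{T})$, since the moduli in the definition absorb $c$. Parts (c)--(e) reduce to these invariances applied to carefully chosen unitaries, while (a) and (b) are direct sup computations on $\mathscr{H}\oplus\mathscr{H}$.

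\textbf{Parts (a) and (b).} I would parametrize a unit vector in $\mathscr{H}\oplus\mathscr{H}$ as $(x_1,x_2)$ with $\|x_1\|^{2}+\|x_2\|^{2}=1$, write (for nonzero components) $x_i=\|x_i\|u_i$ with $\|u_i\|=1$, and read off the block-diagonal inner product as $\|x_1\|^{2}\langle X_ku_1,u_1\rangle+\|x_2\|^{2}\langle Y_ku_2,u_2\rangle$. Minkowski's inequality in the index $k$ then gives the bound $\|x_1\|^{2}w_e(\mathbf{X})+\|x_2\|^{2}w_e(\mathbf{Y})\leq\max\{w_e(\mathbf{X}),w_e(\mathbf{Y})\}$, and the test vectors $(u,0)$, $(0,v)$ produce the matching lower bound, proving (a). Part (b) is structurally identical, using Proposition~\ref{leml1} in the form $\sum_k\|X_ku_1\|^{2}\leq\|\mathbf{X}\|^{2}$ to obtain $\|x_1\|^{2}\|\mathbf{X}\|^{2}+\|x_2\|^{2}\|\mathbf{Y}\|^{2}\leq\max\{\|\mathbf{X}\|^{2},\|\mathbf{Y}\|^{2}\}$.

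\textbf{Parts (c), (d), (e).} Part (c) is immediate from (i) applied to the swap unitary $U=\left[\begin{smallmatrix}0&I\\I&0\end{smallmatrix}\right]$, which interchanges the off-diagonal blocks. For (d) I would take the diagonal unitary $V=\mathrm{diag}(I,e^{-i\theta/2}I)$ and verify the identity $V^{*}\left[\begin{smallmatrix}0&X_k\\Y_k&0\end{smallmatrix}\right]V=e^{-i\theta/2}\left[\begin{smallmatrix}0&X_k\\e^{i\theta}Y_k&0\end{smallmatrix}\right]$, so that (i) and (ii) together yield the claim. For (e) I would use the Hadamard-type unitary $U=\tfrac{1}{\sqrt{2}}\left[\begin{smallmatrix}I&I\\I&-I\end{smallmatrix}\right]$ (symmetric with $U^{2}=I$, hence unitary), under which $U^{*}\left[\begin{smallmatrix}X_k&Y_k\\Y_k&X_k\end{smallmatrix}\right]U=\left[\begin{smallmatrix}X_k+Y_k&0\\0&X_k-Y_k\end{smallmatrix}\right]$, and part (a) finishes the job; the ``in particular'' statement is just the case $\mathbf{X}=\mathbf{0}$.

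\textbf{Main obstacle.} The only step requiring real calculation is (a). The issue there is bookkeeping: one must combine Minkowski's inequality at the level of $\mathbb{C}^d$-vectors with the rescaling $x_i=\|x_i\|u_i$ so that the weights appearing in the final bound are exactly $\|x_1\|^{2}$ and $\|x_2\|^{2}$, whose sum is one and therefore collapses the bound to the maximum (with the edge case $x_i=0$ only strengthening the inequality). Once (a) is in hand, the remaining parts (b)--(e) are short exercises in the unitary-conjugation and phase-absorption invariances stated at the start.
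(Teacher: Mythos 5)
Your proposal is correct and follows essentially the same route as the paper: parts (a) and (b) by the direct supremum computation over unit vectors $(x_1,x_2)$ with Minkowski's inequality in the index $k$, and parts (c)--(e) by the unitary-conjugation invariance $w_e(U^{*}\mathbf{T}U)=w_e(\mathbf{T})$ applied to the swap, phase-diagonal, and Hadamard-type unitaries (the paper's choices differ from yours only by harmless sign and phase conventions). No gaps.
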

 \begin{proof}
(a) Let $u=(x,y)\in\mathscr{H}\oplus \mathscr{H}$ with $\|u\|=1,$ i.e., $\|x\|^2+\|y\|^2=1.$ Then,
\begin{eqnarray*}
&& \left(\sum_{k=1}^{d}\big|\langle \begin{bmatrix}
 X_k & 0 \\
 0 & Y_k
\end{bmatrix} u,u \rangle\big|^2\right)^\frac12 \\
&=& \left(\sum_{k=1}^{d}|\langle X_k x,x \rangle+\langle Y_k y,y \rangle|^2\right)^\frac12\\
&\leq& \left(\sum_{k=1}^{d}|\langle X_k x,x \rangle|^2\right)^\frac12+\left(\sum_{k=1}^{d}|\langle Y_k y,y \rangle|^2\right)^\frac12\,\,(\textit{using Minkowski inequality})\\
&\leq& w_e(\mathbf{X})\|x\|^2+w_e(\mathbf{Y})\|y\|^2\\
&\leq&\max\left\lbrace w_e(\mathbf{X}),w_e(\mathbf{Y})\right\rbrace\left(\|x\|^2+\|y\|^2\right)
=\max\left\lbrace w_e(\mathbf{X}),w_e(\mathbf{Y})\right\rbrace.
\end{eqnarray*}
Taking supremum over $\|u\|=1$, we get \begin{eqnarray*}
 w_e\left(\begin{bmatrix}
  \mathbf{X} &  0\\
  0 & \mathbf{Y}
\end{bmatrix}\right)\leq \max\left\lbrace w_e(\mathbf{X}),w_e(\mathbf{Y})\right\rbrace.
\end{eqnarray*}
Suppose $u=(x,0)\in\mathscr{H}\oplus \mathscr{H}$ where $\|x\|=1$, then $$\left(\sum_{k=1}^{d}|\langle  \begin{bmatrix}
 X_k & 0 \\
 0 & Y_k
\end{bmatrix} u,u \rangle|^2\right)^\frac12=\left(\sum_{k=1}^{d}|\langle X_k x,x \rangle|^2\right)^\frac12.$$
Taking supremum over $\|x\|=1$, we get $$\underset{\|x\|=1}\sup\left(\sum_{k=1}^{d}|\langle  \begin{bmatrix}
 X_k & 0 \\
 0 & Y_k
\end{bmatrix}u,u \rangle|^2\right)^\frac12= w_e(\mathbf{X}).$$  This implies that $  w_e\left(\begin{bmatrix}
  \mathbf{X} &  0\\
  0 & \mathbf{Y}
\end{bmatrix}\right)\geq  w_e(\mathbf{X})$.
Similarly,  $  w_e\left(\begin{bmatrix}
  \mathbf{X} &  0\\
  0 & \mathbf{Y}
\end{bmatrix}\right)\geq  w_e(\mathbf{Y})$. Hence, $  w_e\left(\begin{bmatrix}
  \mathbf{X} &  0\\
  0 & \mathbf{Y}
\end{bmatrix}\right)\geq \max\left\lbrace w_e(\mathbf{X}),w_e(\mathbf{Y})\right\rbrace.$
This completes the proof of (a).\\
(b) Let $u=(x,y)\in\mathscr{H}\oplus \mathscr{H}$ with $\|u\|=1$, i.e., $\|x\|^2+\|y\|^2=1.$ Then, we have
 \begin{eqnarray*}
\sum_{k=1}^{d}\norm{ \begin{bmatrix}
  X_k &  0\\
  0 & Y_k
\end{bmatrix}u}^2
&=& \sum_{k=1}^{d}\norm{ (X_kx, Y_ky)}^2\\
&=& \sum_{k=1}^{d} \|X_k x\|^2+\|Y_ky\|^2\\
&\leq& \|\mathbf{X}\|^2\|x\|^2+\|\mathbf{Y}\|^2\|y\|^2\\
&\leq&\max\left\lbrace \|\mathbf{X}\|^2,\|\mathbf{Y}\|^2\right\rbrace\left(\|x\|^2+\|y\|^2\right)
=\max\left\lbrace \|\mathbf{X}\|^2,\|\mathbf{Y}\|^2\right\rbrace.
\end{eqnarray*}
Taking supremum over $\|u\|=1$, we get  $$\norm{ \begin{bmatrix}
  \mathbf{X} &  0\\
  0 & \mathbf{Y}
\end{bmatrix}} \leq\max\left\lbrace \|\mathbf{X}\|,\|\mathbf{Y}\|\right\rbrace.$$ 
Now, let $u=(x,0)\in\mathscr{H}\oplus \mathscr{H}$ with $\|x\|=1$, then $$\sum_{k=1}^{d}\norm{ \begin{bmatrix}
  X_k &  0\\
  0 & Y_k
\end{bmatrix}u}^2= \sum_{k=1}^{d}\|X_kx\|^2.$$
Taking supremum over $\|x\|=1$, we get that  $\underset{\|x\|=1}\sup\sum_{k=1}^{d}\norm{ \begin{bmatrix}
  X_k &  0\\
  0 & Y_k
\end{bmatrix}u}^2=\|\mathbf{X}\|^2.$  This implies that $ \norm{ \begin{bmatrix}
  \mathbf{X} &  0\\
  0 & \mathbf{Y}
\end{bmatrix}}\geq\|\mathbf{X}\|.$ 
Similarly, 
$ \norm{ \begin{bmatrix}
  \mathbf{X} &  0\\
  0 & \mathbf{Y}
\end{bmatrix}}\geq\|\mathbf{Y}\|.$ Therefore, $ \norm{ \begin{bmatrix}
  \mathbf{X} &  0\\
  0 & \mathbf{Y}
\end{bmatrix}}\geq\max\left\lbrace\| \mathbf{X}\|,\|\mathbf{Y}\|\right\rbrace.$ This completes the proof of (b).\\
(c) It is easy to verify (see also \cite[Section 2]{P}) that 
\begin{eqnarray}\label{pp0}
	w_e(T_1,T_2,\ldots,T_d)=w_e(U^*T_1U,U^*T_2U,\ldots,U^*T_dU)
\end{eqnarray}  for every unitary operator $U$. The proof (c) follows from \eqref{pp0} by taking $U=\begin{bmatrix}
0 & I \\
I & 0
\end{bmatrix}$.\\
(d) The proof (d) follows from \eqref{pp0} by taking $U=\begin{bmatrix}
I & 0 \\
0 & e^{\frac{i\theta}{2}}I
\end{bmatrix}$. \\
(e) Let  $U=\frac{1}{\sqrt{2}}\begin{bmatrix}
I & I \\
-I & I
\end{bmatrix}$ and $T_k=  \begin{bmatrix}
  X_k &  Y_k\\
  Y_k & X_k
\end{bmatrix}.$ Then $U^*T_kU=\begin{bmatrix}
  X_k- Y_k & 0\\
 0 & Y_k + X_k
\end{bmatrix}.$ Using (a) and \eqref{pp0},  we get $w_e\left(\begin{bmatrix}
  \mathbf{X} &  \mathbf{Y}\\
  \mathbf{Y} & \mathbf{X}
\end{bmatrix}\right)=\max\left\lbrace w_e(\mathbf{X-Y}),w_e(\mathbf{X+Y})\right\rbrace.$ 
In particular, if we take $\mathbf{X}=0$, then $w_e\left(\begin{bmatrix}
	0 &  \mathbf{Y}\\
	\mathbf{Y} & 0
\end{bmatrix}\right)= w_e(\mathbf{Y}).$ This completes the proof (e).
\end{proof}

Next we develop an upper bound for the joint numerical radius of $2\times 2$ operator matrices whose entries are $d$-tuple operators.

\begin{theorem}
Let $\mathbf{X}=(X_1,X_2, \ldots,X_d)$, $\mathbf{Y}=(Y_1,Y_2, \ldots,Y_d)$,  $\mathbf{Z}=(Z_1,Z_2, \ldots,Z_d),$  $\mathbf{W}=(W_1,W_2, \ldots,W_d)\in\mathbb{B}^d(\mathscr{H})$. Then $$ w_e\left(\begin{bmatrix}
\mathbf{X} & \mathbf{Y} \\
\mathbf{Z} & \mathbf{W} 
\end{bmatrix}\right)\leq w \left(\begin{bmatrix}
w_e(\mathbf{X}) & \|\mathbf{Y}\| \\
\|\mathbf{Z}\| & w_e(\mathbf{W}) 
\end{bmatrix}\right).$$  
\label{them2}\end{theorem}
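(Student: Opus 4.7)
The strategy is to take a unit vector $u=(x,y)\in\mathscr{H}\oplus\mathscr{H}$, expand the inner product for each entry of the $d$-tuple, apply Minkowski in the $\ell^2$-sum over $k$, bound each of the four resulting sums separately, and finally recognize the total as $\langle A v,v\rangle$ for a natural unit vector $v\in\mathbb{C}^2$, where $A$ denotes the $2\times 2$ scalar matrix on the right-hand side.

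\smallskip

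\noindent\textbf{Step 1: Expansion.} Write $T_k=\begin{bmatrix}X_k & Y_k\\ Z_k & W_k\end{bmatrix}$, so for $u=(x,y)$ with $\|x\|^2+\|y\|^2=1$,
\begin{equation*}
\langle T_k u,u\rangle=\langle X_kx,x\rangle+\langle Y_ky,x\rangle+\langle Z_kx,y\rangle+\langle W_ky,y\rangle.
\end{equation*}
Apply the triangle inequality pointwise in $k$ and then Minkowski's inequality for the $\ell^2$-norm in the index $k$ to split
\begin{equation*}
\Bigl(\sum_{k=1}^{d}|\langle T_k u,u\rangle|^2\Bigr)^{1/2}\leq S_1+S_2+S_3+S_4,
\end{equation*}
where $S_1,S_4$ are the diagonal sums in $X,W$ and $S_2,S_3$ are the off-diagonal ones.

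\smallskip

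\noindent\textbf{Step 2: Bounding the four sums.} For the diagonal terms, the definition of $w_e$ (applied to the normalized vectors $x/\|x\|$ and $y/\|y\|$) gives
\begin{equation*}
S_1=\Bigl(\sum_k|\langle X_kx,x\rangle|^2\Bigr)^{1/2}\leq w_e(\mathbf{X})\|x\|^2,\qquad S_4\leq w_e(\mathbf{W})\|y\|^2.
\end{equation*}
For the off-diagonal terms, Cauchy--Schwarz gives $|\langle Y_ky,x\rangle|\leq\|Y_ky\|\|x\|$, so pulling $\|x\|$ out of the $\ell^2$-sum and invoking Proposition \ref{leml1} yields
\begin{equation*}
S_2\leq\|x\|\Bigl(\sum_k\|Y_ky\|^2\Bigr)^{1/2}\leq\|\mathbf{Y}\|\|x\|\|y\|,\qquad S_3\leq\|\mathbf{Z}\|\|x\|\|y\|.
\end{equation*}

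\smallskip

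\noindent\textbf{Step 3: Recognizing a $2\times 2$ quadratic form.} Set $A=\begin{bmatrix}w_e(\mathbf{X}) & \|\mathbf{Y}\|\\ \|\mathbf{Z}\| & w_e(\mathbf{W})\end{bmatrix}$ and $v=(\|x\|,\|y\|)^\top\in\mathbb{C}^2$, which is a unit vector. The upper bound from Step 2 collapses into the real quadratic form
\begin{equation*}
w_e(\mathbf{X})\|x\|^2+(\|\mathbf{Y}\|+\|\mathbf{Z}\|)\|x\|\|y\|+w_e(\mathbf{W})\|y\|^2=\langle Av,v\rangle\leq|\langle Av,v\rangle|\leq w(A).
\end{equation*}
Taking the supremum over $\|u\|=1$ completes the proof.

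\smallskip

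\noindent\textbf{Expected difficulty.} All four estimates in Step 2 are routine, so the only non-mechanical step is Step 3: the mild cleverness lies in noticing that the vector of norms $(\|x\|,\|y\|)$ is itself a unit vector of $\mathbb{C}^2$ whose associated quadratic form against $A$ reproduces exactly the upper bound. Once this is seen, the passage from the Minkowski-split sum to $w(A)$ is immediate.
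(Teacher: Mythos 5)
Your proposal is correct and follows essentially the same route as the paper: expand $\langle T_ku,u\rangle$, use Minkowski to split the $\ell^2$-sum over $k$ into the four pieces, bound the diagonal pieces by $w_e(\mathbf{X})\|x\|^2$ and $w_e(\mathbf{W})\|y\|^2$ and the off-diagonal ones by $\|\mathbf{Y}\|\|x\|\|y\|$ and $\|\mathbf{Z}\|\|x\|\|y\|$, and identify the result as $\langle A\widetilde{x},\widetilde{x}\rangle$ for the unit vector $\widetilde{x}=(\|x\|,\|y\|)\in\mathbb{C}^2$. The only cosmetic difference is that you perform one four-way Minkowski split where the paper applies two successive two-way splits; the estimates and the final quadratic-form step are identical.
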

\begin{proof}
Let $u=(x,y)\in\mathscr{H}\oplus \mathscr{H}$ with $\|u\|=1$, i.e., $\|x\|^2+\|y\|^2=1.$ Now,
\begin{eqnarray*}
&&\left(\sum_{k=1}^{d} \big| \langle \begin{bmatrix}
 X_k & Y_k \\
 Z_k & W_k
\end{bmatrix}u , u \rangle\big|^2\right)^\frac12\\
&=& \left(\sum_{k=1}^{d} \big| \langle (X_k x+Y_k y, Z_k x + W_k y), (x,y) \rangle\big|^2\right)^\frac12\\
&\leq& \left(\sum_{k=1}^{d} \big| \langle X_k x,x\rangle+\langle W_k y, y \rangle\big|^2\right)^\frac12 + \left(\sum_{k=1}^{d} \big| \langle Y_k y,x\rangle+\langle Z_k x, y \rangle\big|^2\right)^\frac12\\
&& \,\,\,\,\,\,\,\,\,(\textit{using Minkowski inequality})\\
&\leq& \left(\sum_{k=1}^{d} \big| \langle X_k x,x\rangle\big|^2\right)^\frac12+\left(\sum_{k=1}^{d} \big| \langle W_k y,y\rangle\big|^2\right)^\frac12+\left(\sum_{k=1}^{d} \big| \langle Z_k x,y\rangle\big|^2\right)^\frac12+ \left(\sum_{k=1}^{d} \big| \langle Y_k y,x\rangle\big|^2\right)^\frac12\\
&&\,\,\,\,\,\,\,\,\,(\textit{using Minkowski inequality})\\
&\leq& w_e(\mathbf{X})\|x\|^2+ w_e(\mathbf{W})\|y\|^2+\left(\sum_{k=1}^{d} \|Z_k x\|^2\|y\|^2\right)^\frac12+\left(\sum_{k=1}^{d} \|Y_k y\|^2\|x\|^2\right)^\frac12\\
&\leq& w_e(\mathbf{X})\|x\|^2+ w_e(\mathbf{W})\|y\|^2+\|\mathbf{Z}\|\|x\|\|y\|+ \|\mathbf{Y}\|\|y\|\|x\|\\
&=& \langle \begin{bmatrix}
 w_e(\mathbf{X}) & \|\mathbf{Y}\| \\
 \|\mathbf{Z}\| & w_e(\mathbf{W})
\end{bmatrix}\widetilde{x},\widetilde{x}\rangle,\,\,\textit{where $\widetilde{x}=
(\|x\|,
\|y\|
)\in \mathbb{C}^2$}.
\end{eqnarray*} Thus, \begin{eqnarray*}
  w_e\left(\begin{bmatrix}
\mathbf{X} & \mathbf{Y} \\
\mathbf{Z} & \mathbf{W}
\end{bmatrix}\right)&=&\sup\left\lbrace\left(\sum_{k=1}^{d} \big| \langle \begin{bmatrix}
 X_k & Y_k \\
 Z_k & W_k
\end{bmatrix}u , u \rangle\big|^2\right)^\frac12:u\in\mathscr{H}\oplus \mathscr{H},\|u\|=1\right\rbrace\\
&\leq&w \left(\begin{bmatrix}
w_e(\mathbf{X}) & \|\mathbf{Y}\| \\
\|\mathbf{Z}\| & w_e(\mathbf{W}) 
\end{bmatrix}\right).
\end{eqnarray*}

\end{proof}

Note that $w_e(\mathbf{X})\leq \|\mathbf{X}\|, w_e(\mathbf{W})\leq \|\mathbf{W}\|$ and $w([a_{ij}])\leq w([b_{ij}]),$ for all $0\leq a_{ij}\leq b_{ij}$. Therefore, the following corollary is immediate from Theorem \ref{them2}.

\begin{cor}
	Let  $\mathbf{X}=(X_1,X_2, \ldots,X_d)$, $\mathbf{Y}=(Y_1,Y_2, \ldots,Y_d)$,  $\mathbf{Z}=(Z_1,Z_2, \ldots,Z_d),$  $\mathbf{W}=(W_1,W_2, \ldots,W_d) \in\mathbb{B}^d(\mathscr{H})$. Then $$ w_e\left(\begin{bmatrix}
		\mathbf{X} & \mathbf{Y} \\
		\mathbf{Z} & \mathbf{W}
	\end{bmatrix}\right)\leq w \left(\begin{bmatrix}
		\|\mathbf{X}\| & \|\mathbf{Y}\| \\
		\|\mathbf{Z}\| & \|\mathbf{W}\| 
	\end{bmatrix}\right).$$  
	\label{them1}\end{cor}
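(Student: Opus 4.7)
The plan is to deduce this corollary directly from Theorem \ref{them2} by replacing the diagonal $w_e$-entries with operator norms and then invoking entrywise monotonicity of the numerical radius of $2\times 2$ matrices with non-negative entries. First I would apply Theorem \ref{them2} to obtain
$$w_e\left(\begin{bmatrix} \mathbf{X} & \mathbf{Y} \\ \mathbf{Z} & \mathbf{W} \end{bmatrix}\right) \leq w\left(\begin{bmatrix} w_e(\mathbf{X}) & \|\mathbf{Y}\| \\ \|\mathbf{Z}\| & w_e(\mathbf{W}) \end{bmatrix}\right),$$
and then invoke the inequality \eqref{pp00}, which gives $w_e(\mathbf{X}) \leq \|\mathbf{X}\|$ and $w_e(\mathbf{W}) \leq \|\mathbf{W}\|$. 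Thus the matrix on the right is entrywise dominated by $\begin{bmatrix} \|\mathbf{X}\| & \|\mathbf{Y}\| \\ \|\mathbf{Z}\| & \|\mathbf{W}\| \end{bmatrix}$, and both matrices have non-negative entries.

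The only step that requires a brief justification is the monotonicity assertion: if $A = [a_{ij}]$ and $B = [b_{ij}]$ are $2\times 2$ matrices with $0 \leq a_{ij} \leq b_{ij}$, then $w(A) \leq w(B)$. I would verify this by picking any unit vector $z = (x,y) \in \mathbb{C}^2$ and applying the triangle inequality to get $|\langle Az, z\rangle| \leq \sum_{i,j} a_{ij}|z_i||z_j| = \langle A\tilde{z}, \tilde{z}\rangle$, where $\tilde{z} = (|x|,|y|)$ is again a unit vector but with non-negative coordinates. Entrywise comparison then yields $\langle A\tilde{z}, \tilde{z}\rangle \leq \langle B\tilde{z}, \tilde{z}\rangle \leq w(B)$, and taking supremum over $z$ gives $w(A) \leq w(B)$.

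Chaining these two observations with Theorem \ref{them2} immediately produces the desired inequality. There is no genuine obstacle in this argument; as the authors note, the statement is essentially an immediate corollary of Theorem \ref{them2} once one records the bound $w_e \leq \|\cdot\|$ and the elementary entrywise monotonicity of the numerical radius on nonnegative matrices.
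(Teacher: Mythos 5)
Your proposal is correct and follows exactly the paper's route: the authors likewise deduce the corollary from Theorem \ref{them2} by noting $w_e(\mathbf{X})\leq\|\mathbf{X}\|$, $w_e(\mathbf{W})\leq\|\mathbf{W}\|$ and the entrywise monotonicity $w([a_{ij}])\leq w([b_{ij}])$ for $0\leq a_{ij}\leq b_{ij}$. Your added verification of that monotonicity fact is accurate and merely makes explicit what the paper leaves as a remark.
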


It should be mentioned here that Theorem \ref{them2} gives better bound than that in Corollary \ref{them1}.
To prove the next result we need the following lemma. 
\begin{lemma}\cite[p. 44]{H}
Let $B=[b_{ij}]$ be an $n\times n $ matrix such that $b_{ij}\geq0$ for all $i,j=1,2,...,n.$ Then $$
w(B)=r\left(\begin{bmatrix}
\frac{b_{ij}+b_{ji}}{2}
\end{bmatrix}\right),$$ where $r(\cdot)$ denotes the spectral radius. 
\label{lem11}\end{lemma}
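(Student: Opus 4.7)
The plan is to establish the identity $w(B)=r(C)$ for $C=\bigl[\tfrac{b_{ij}+b_{ji}}{2}\bigr]$ by proving the two inequalities $w(B)\le r(C)$ and $w(B)\ge r(C)$ separately, exploiting the fact that $C$ is real, symmetric and entrywise nonnegative.

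For the upper bound, I would start from the definition $w(B)=\sup_{\|y\|=1}|\langle By,y\rangle|$ and expand the inner product as $\langle By,y\rangle=\sum_{i,j}b_{ij}y_j\overline{y_i}$. Because every $b_{ij}\ge 0$, the triangle inequality gives
\[
|\langle By,y\rangle|\le\sum_{i,j}b_{ij}|y_j||y_i|=\langle B|y|,|y|\rangle,
\]
where $|y|$ denotes the entrywise modulus. Since $|y|$ is a real vector, $\langle B|y|,|y|\rangle=\langle B^{T}|y|,|y|\rangle$, so $\langle B|y|,|y|\rangle=\langle C|y|,|y|\rangle$. The matrix $C$ is real symmetric, hence Hermitian, so its operator norm equals its spectral radius, and therefore $\langle C|y|,|y|\rangle\le r(C)\||y|\|^{2}=r(C)$. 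Taking the supremum over unit vectors yields $w(B)\le r(C)$.

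For the lower bound, the key tool is the Perron--Frobenius theorem: since $C$ is entrywise nonnegative (and here even symmetric), its spectral radius $r(C)$ is an eigenvalue admitting a nonnegative eigenvector. I would pick a unit vector $x\in\mathbb{R}^{n}_{\ge 0}$ with $Cx=r(C)x$ and decompose $B=C+A$, where $A=\tfrac12(B-B^{T})$ is real and skew-symmetric. Because $x$ is real, $\langle Ax,x\rangle=x^{T}Ax=0$, so $\langle Bx,x\rangle=\langle Cx,x\rangle=r(C)$, and hence $w(B)\ge|\langle Bx,x\rangle|=r(C)$.

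The main obstacle is the lower bound, and in particular invoking Perron--Frobenius in the right generality: one does not want to assume irreducibility of $C$, so I would use the standard form of the theorem stating that \emph{every} entrywise nonnegative square matrix has its spectral radius as an eigenvalue with at least one nonnegative (but not necessarily positive) eigenvector. Once this is in hand, the rest of the argument is essentially bookkeeping: the triangle-inequality step for the upper bound and the vanishing of $\langle Ax,x\rangle$ on real vectors for the lower bound.
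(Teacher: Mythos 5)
Your argument is correct. Note, however, that the paper does not prove this lemma at all: it is imported verbatim from Halmos's \emph{A Hilbert Space Problem Book} with only a citation, so there is no in-paper proof to compare against. Your two-sided argument is the standard one and is sound: the upper bound correctly uses entrywise nonnegativity to pass from $|\langle By,y\rangle|$ to $\langle B|y|,|y|\rangle$, then symmetrizes on real vectors to get $\langle C|y|,|y|\rangle\le r(C)$ via $\|C\|=r(C)$ for the Hermitian matrix $C$; the lower bound correctly isolates the only place where something beyond symmetry is needed, namely that $r(C)$ (and not $-r(C)$) is actually an eigenvalue of $C$, which the general (reducible) form of Perron--Frobenius supplies, after which $\langle Ax,x\rangle=0$ for the real skew-symmetric part $A$ on any real vector finishes the estimate. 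One small remark: the nonnegativity of the Perron eigenvector is not actually needed --- any real unit eigenvector of $C$ for the eigenvalue $r(C)$ would do, since $x^{T}Ax=0$ for every real $x$.
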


Applying Theorem \ref{them2} and Corollary \ref{them1}, and  using Lemma \ref{lem11}, we obtain the following two corollaries.

\begin{cor}
	Let  $\mathbf{X}=(X_1,X_2, \ldots,X_d)$, $\mathbf{Y}=(Y_1,Y_2,\ldots,Y_d)$,  $\mathbf{Z}=(Z_1,Z_2, \ldots,Z_d),$  $\mathbf{W}=(W_1,W_2, \ldots,W_d)\in\mathbb{B}^d(\mathscr{H})$. Then $$ w_e\left(\begin{bmatrix}
		\mathbf{X} & \mathbf{Y} \\
		\mathbf{Z} & \mathbf{W}
	\end{bmatrix}\right)\leq r\left([c_{ij}]\right)=\frac12\left(w_e(\mathbf{X})+w_e(\mathbf{W})+\sqrt{(w_e(\mathbf{X})-w_e(\mathbf{W}))^2+(\|\mathbf{Y}\|+\|\mathbf{Z}\|)^2}\right),$$ where $c_{11}=w_e(\mathbf{X})$, $c_{12}=c_{21}=\frac{\|\mathbf{Y}\|+\|\mathbf{Z}\|}{2}$, $c_{22}=w_e(\mathbf{W})$.  
\end{cor}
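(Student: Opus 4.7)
The plan is to combine Theorem \ref{them2} with Lemma \ref{lem11} and then explicitly compute the spectral radius of a $2\times 2$ symmetric nonnegative matrix. This is a short, essentially formal derivation; there is no serious obstacle, just a computation to carry out cleanly.

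\medskip

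First, I would invoke Theorem \ref{them2} to get
\[
w_e\!\left(\begin{bmatrix}\mathbf{X} & \mathbf{Y} \\ \mathbf{Z} & \mathbf{W}\end{bmatrix}\right) \;\leq\; w\!\left(\begin{bmatrix} w_e(\mathbf{X}) & \|\mathbf{Y}\| \\ \|\mathbf{Z}\| & w_e(\mathbf{W}) \end{bmatrix}\right),
\]
and observe that the $2\times 2$ scalar matrix $B=\begin{bmatrix} w_e(\mathbf{X}) & \|\mathbf{Y}\| \\ \|\mathbf{Z}\| & w_e(\mathbf{W}) \end{bmatrix}$ has nonnegative entries. Then Lemma \ref{lem11} converts the numerical radius of $B$ into the spectral radius of its symmetrized version, namely
\[
w(B) \;=\; r\!\left(\begin{bmatrix} w_e(\mathbf{X}) & \tfrac{\|\mathbf{Y}\|+\|\mathbf{Z}\|}{2} \\ \tfrac{\|\mathbf{Y}\|+\|\mathbf{Z}\|}{2} & w_e(\mathbf{W}) \end{bmatrix}\right) \;=\; r\bigl([c_{ij}]\bigr).
\]

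\medskip

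Next, I would compute $r([c_{ij}])$ directly. Setting $a=w_e(\mathbf{X})$, $c=w_e(\mathbf{W})$, $b=\tfrac{\|\mathbf{Y}\|+\|\mathbf{Z}\|}{2}$, the characteristic polynomial of the symmetric matrix $\begin{bmatrix}a & b\\ b & c\end{bmatrix}$ is $\lambda^2-(a+c)\lambda+(ac-b^2)$, with roots
\[
\lambda_{\pm} \;=\; \frac{a+c \pm \sqrt{(a-c)^2+4b^2}}{2}.
\]
Since $a,c,b\geq 0$, the larger eigenvalue $\lambda_+$ is nonnegative and hence equals the spectral radius. Substituting $a$, $b$, $c$ back yields exactly
\[
r([c_{ij}]) \;=\; \frac{1}{2}\!\left(w_e(\mathbf{X})+w_e(\mathbf{W})+\sqrt{(w_e(\mathbf{X})-w_e(\mathbf{W}))^2 + (\|\mathbf{Y}\|+\|\mathbf{Z}\|)^2}\right),
\]
which is the stated right-hand side. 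Chaining the two inequalities completes the proof. The only thing to double-check is that Lemma \ref{lem11} applies (which it does, since all four entries of $B$ are nonnegative real numbers), and that $\lambda_+ \geq |\lambda_-|$, which is clear because $a+c\geq 0$ and the discriminant is nonnegative.
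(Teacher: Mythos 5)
Your proposal is correct and follows exactly the route the paper intends: apply Theorem \ref{them2} to pass to the scalar matrix $B$, use Lemma \ref{lem11} to replace $w(B)$ by the spectral radius of its symmetrization, and compute the larger eigenvalue explicitly. The paper leaves this as an immediate consequence of those two results, so your write-up simply fills in the same computation in more detail.
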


\begin{cor}
Let $\mathbf{X}=(X_1,X_2, \ldots,X_d)$, $\mathbf{Y}=(Y_1,Y_2, \ldots,Y_d)$,  $\mathbf{Z}=(Z_1,Z_2, \ldots,Z_d),$  $\mathbf{W}=(W_1,W_2, \ldots,W_d)\in\mathbb{B}^d(\mathscr{H})$. Then $$ w_e\left(\begin{bmatrix}
\mathbf{X} & \mathbf{Y} \\
\mathbf{Z} & \mathbf{W}
\end{bmatrix}\right)\leq r\left([c_{ij}]\right)=\frac12\left(\|\mathbf{X}\|+\|\mathbf{W}\|+\sqrt{(\|\mathbf{X}\|-\|\mathbf{W}\|)^2+(\|\mathbf{Y}\|+\|\mathbf{Z}\|)^2}\right),$$ where $c_{11}=\|\mathbf{X}\|$, $c_{12}=c_{21}=\frac{\|\mathbf{Y}\|+\|\mathbf{Z}\|}{2}$, $c_{22}=\|\mathbf{W}\|$.  
\end{cor}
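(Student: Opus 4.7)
The plan is to chain the two preceding results and then compute a $2\times 2$ spectral radius explicitly. First I would apply Corollary \ref{them1} to reduce the problem from the joint numerical radius on the left side to the ordinary numerical radius of the nonnegative scalar matrix
$$M=\begin{bmatrix} \|\mathbf{X}\| & \|\mathbf{Y}\| \\ \|\mathbf{Z}\| & \|\mathbf{W}\| \end{bmatrix}.$$
Since all entries of $M$ are nonnegative, Lemma \ref{lem11} applies and gives
$$w(M)=r\bigl([c_{ij}]\bigr),$$
with $c_{11}=\|\mathbf{X}\|$, $c_{22}=\|\mathbf{W}\|$, and $c_{12}=c_{21}=\tfrac{\|\mathbf{Y}\|+\|\mathbf{Z}\|}{2}$, exactly the matrix in the statement.

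Next I would compute $r([c_{ij}])$ directly. Because $[c_{ij}]$ is a $2\times 2$ real symmetric matrix with nonnegative entries, its eigenvalues are real and given by the quadratic formula applied to its characteristic polynomial $\lambda^2-(c_{11}+c_{22})\lambda+(c_{11}c_{22}-c_{12}^2)=0$, namely
$$\lambda_{\pm}=\tfrac12\!\left(\|\mathbf{X}\|+\|\mathbf{W}\|\pm\sqrt{(\|\mathbf{X}\|-\|\mathbf{W}\|)^2+(\|\mathbf{Y}\|+\|\mathbf{Z}\|)^2}\right).$$
The spectral radius is $\lambda_+$, which is nonnegative since $\|\mathbf{X}\|+\|\mathbf{W}\|\ge 0$ and the discriminant dominates $|\|\mathbf{X}\|-\|\mathbf{W}\||$. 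Combining this with the chain above yields the stated inequality.

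There is no real obstacle here; the result is a mechanical corollary. The only thing to double-check is that Lemma \ref{lem11} is being applied to a genuinely entrywise-nonnegative matrix (which it is, as operator norms are nonnegative) and that the symmetrization $\tfrac{c_{ij}+c_{ji}}{2}$ is taken correctly on the off-diagonal entries $\|\mathbf{Y}\|$ and $\|\mathbf{Z}\|$, yielding the common value $\tfrac{\|\mathbf{Y}\|+\|\mathbf{Z}\|}{2}$ used in the statement.
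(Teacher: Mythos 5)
Your proposal is correct and follows exactly the route the paper intends: chain Corollary \ref{them1} with Lemma \ref{lem11} and then compute the spectral radius of the symmetrized $2\times 2$ matrix via the quadratic formula. The eigenvalue computation checks out, since the discriminant $(c_{11}+c_{22})^2-4(c_{11}c_{22}-c_{12}^2)$ simplifies to $(\|\mathbf{X}\|-\|\mathbf{W}\|)^2+(\|\mathbf{Y}\|+\|\mathbf{Z}\|)^2$ as stated.
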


Next, by using the power inequality obtained in Theorem \ref{th2}, we develop a lower bound for the joint numerical radius of   $2 \times2 $ operator matrices whose entries are $d$-tuple operators.

\begin{theorem}
Let $\mathbf{X}=(X_1,X_2, \ldots,X_d)$, $\mathbf{Y}=(Y_1,Y_2, \ldots,Y_d)\in\mathbb{B}^d(\mathscr{H})$. Then $$\sqrt[2n]{ \frac{1}{\sqrt{d}}\max \{w_e((\mathbf{XY})^n),w_e((\mathbf{Y}\mathbf{X})^n)}\}\leq w_e\left(\begin{bmatrix}
0 & \mathbf{X} \\
\mathbf{Y} & 0
\end{bmatrix}\right).$$
\end{theorem}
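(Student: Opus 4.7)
The plan is to put the result into the form where the power inequality of Theorem~\ref{th2} applies to the $d$-tuple block operator $\mathbf{T} = \begin{bmatrix} 0 & \mathbf{X} \\ \mathbf{Y} & 0 \end{bmatrix} \in \mathbb{B}^d(\mathscr{H}\oplus\mathscr{H})$, then evaluate $w_e(\mathbf{T}^{2n})$ via the block-diagonal identity of Lemma~\ref{lem1}(a). This is a classical trick: squaring an off-diagonal block operator makes it diagonal.

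First I would compute $\mathbf{T}^{2}$ componentwise. The $k$-th entry of $\mathbf{T}$ is $\begin{bmatrix} 0 & X_k \\ Y_k & 0 \end{bmatrix}$, whose square is $\begin{bmatrix} X_k Y_k & 0 \\ 0 & Y_k X_k \end{bmatrix}$. Hence, under the conventions $\mathbf{T}^n=(T_1^n,\dots,T_d^n)$ and the block-matrix-of-tuples notation, we obtain
\[
\mathbf{T}^{2n}=\begin{bmatrix} (\mathbf{XY})^n & 0 \\ 0 & (\mathbf{YX})^n \end{bmatrix}.
\]

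Next I would invoke the power inequality (Theorem~\ref{th2}) applied with exponent $2n$:
\[
w_e(\mathbf{T}^{2n})\le \sqrt{d}\, w_e^{2n}(\mathbf{T}).
\]
By Lemma~\ref{lem1}(a), the left-hand side equals $\max\{w_e((\mathbf{XY})^n),\, w_e((\mathbf{YX})^n)\}$. Substituting gives
\[
\max\{w_e((\mathbf{XY})^n),\, w_e((\mathbf{YX})^n)\}\le \sqrt{d}\, w_e^{2n}(\mathbf{T}),
\]
and rearranging, followed by taking the $(2n)$-th root, yields the desired inequality.

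There is essentially no serious obstacle: the argument is a clean composition of previously established facts, and every step is forced by notation. The only care needed is checking the block-square identity and confirming that Lemma~\ref{lem1}(a), proved for two $d$-tuple operators on $\mathscr{H}$, applies verbatim to the tuples $(\mathbf{XY})^n$ and $(\mathbf{YX})^n$ on $\mathscr{H}$; both verifications are routine.
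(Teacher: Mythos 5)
Your proposal is correct and follows essentially the same route as the paper: form the off-diagonal block tuple $\mathbf{T}$, note $\mathbf{T}^{2n}$ is block-diagonal with entries $(\mathbf{XY})^n$ and $(\mathbf{YX})^n$, then combine Lemma~\ref{lem1}(a) with the power inequality of Theorem~\ref{th2}. No gaps; the verification of the block-square identity is exactly the only computation the paper also relies on.
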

\begin{proof}
Let $\mathbf{T}=\begin{bmatrix}
0 & \mathbf{X} \\
\mathbf{Y} & 0
\end{bmatrix}.$  Then  $\mathbf{T}^{2n}=\begin{bmatrix}
 (\mathbf{XY})^n & 0 \\
 0 &  (\mathbf{YX})^n
\end{bmatrix}$ for all $n=1,2,3, \ldots$.
Using Lemma \ref{lem1} (a) and  Theorem \ref{th2}, we get $$\max \{w_e((\mathbf{X}\mathbf{Y})^n),w_e((\mathbf{Y}\mathbf{X})^n)\}=w_e( \mathbf{T}^{2n})\leq\sqrt{d} w_e^{2n}( \mathbf{T}).$$   This completes the proof.  
\end{proof}

Next we prove the following lower and upper bounds.

\begin{theorem}
Let $\mathbf{X}=(X_1,X_2, \ldots,X_d)$, $\mathbf{Y}=(Y_1,Y_2, \ldots,Y_d)\in\mathbb{B}^d(\mathscr{H}).$ Then 
\begin{eqnarray*}
 \frac{1}{2} \max\left\lbrace w_e(\mathbf{X}+\mathbf{Y}),w_e(\mathbf{X}-\mathbf{Y})\right\rbrace \leq w_e\left(\begin{bmatrix}
	0 & \mathbf{X} \\
	\mathbf{Y} & 0
\end{bmatrix}\right) \leq \frac{1}{2}(w_e(\mathbf{X}+\mathbf{Y})+w_e(\mathbf{X}-\mathbf{Y})).	
\end{eqnarray*}
\label{th5}\end{theorem}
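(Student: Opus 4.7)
The plan is to reduce both bounds to the subadditivity of $w_e(\cdot)$ (Lemma \ref{lem6}(b)) combined with the symmetry identities of Lemma \ref{lem1}(c)--(e). The key observation is the decomposition
\begin{eqnarray*}
\begin{bmatrix} 0 & \mathbf{X} \\ \mathbf{Y} & 0 \end{bmatrix}
= \frac{1}{2}\begin{bmatrix} 0 & \mathbf{X}+\mathbf{Y} \\ \mathbf{X}+\mathbf{Y} & 0 \end{bmatrix}
+ \frac{1}{2}\begin{bmatrix} 0 & \mathbf{X}-\mathbf{Y} \\ -(\mathbf{X}-\mathbf{Y}) & 0 \end{bmatrix}.
\end{eqnarray*}

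\textbf{Upper bound.} I would apply Lemma \ref{lem6}(b) to the identity above, producing
$w_e\!\left(\begin{bmatrix} 0 & \mathbf{X} \\ \mathbf{Y} & 0 \end{bmatrix}\right)\leq \frac{1}{2} w_e\!\left(\begin{bmatrix} 0 & \mathbf{X}+\mathbf{Y} \\ \mathbf{X}+\mathbf{Y} & 0 \end{bmatrix}\right) + \frac{1}{2} w_e\!\left(\begin{bmatrix} 0 & \mathbf{X}-\mathbf{Y} \\ -(\mathbf{X}-\mathbf{Y}) & 0 \end{bmatrix}\right).$
The first term collapses to $\frac{1}{2}w_e(\mathbf{X}+\mathbf{Y})$ directly by Lemma \ref{lem1}(e) (particular case). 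For the second term, I would first apply Lemma \ref{lem1}(d) with $\theta=\pi$ to absorb the sign on the lower block, reducing it to $\begin{bmatrix}0 & \mathbf{X}-\mathbf{Y}\\ \mathbf{X}-\mathbf{Y} & 0\end{bmatrix}$, and then apply Lemma \ref{lem1}(e) to obtain $\frac{1}{2}w_e(\mathbf{X}-\mathbf{Y})$. Adding the two halves yields the claimed upper bound.

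\textbf{Lower bound.} For this direction I would go in the reverse direction, writing
\begin{eqnarray*}
\begin{bmatrix} 0 & \mathbf{X}+\mathbf{Y} \\ \mathbf{X}+\mathbf{Y} & 0 \end{bmatrix}
= \begin{bmatrix} 0 & \mathbf{X} \\ \mathbf{Y} & 0 \end{bmatrix} + \begin{bmatrix} 0 & \mathbf{Y} \\ \mathbf{X} & 0 \end{bmatrix}.
\end{eqnarray*}
By subadditivity (Lemma \ref{lem6}(b)) and Lemma \ref{lem1}(c) (which swaps the off-diagonal entries without changing $w_e$), the right-hand side is at most $2\,w_e\!\left(\begin{bmatrix} 0 & \mathbf{X} \\ \mathbf{Y} & 0 \end{bmatrix}\right)$, while the left-hand side equals $w_e(\mathbf{X}+\mathbf{Y})$ by Lemma \ref{lem1}(e). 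This gives $\tfrac{1}{2}w_e(\mathbf{X}+\mathbf{Y})\leq w_e\!\left(\begin{bmatrix} 0 & \mathbf{X} \\ \mathbf{Y} & 0 \end{bmatrix}\right)$. For the minus sign I would use the analogous decomposition
\begin{eqnarray*}
\begin{bmatrix} 0 & \mathbf{X}-\mathbf{Y} \\ \mathbf{X}-\mathbf{Y} & 0 \end{bmatrix}
= \begin{bmatrix} 0 & \mathbf{X} \\ -\mathbf{Y} & 0 \end{bmatrix} + \begin{bmatrix} 0 & -\mathbf{Y} \\ \mathbf{X} & 0 \end{bmatrix},
\end{eqnarray*}
and invoke Lemma \ref{lem1}(d) (with $\theta=\pi$) together with Lemma \ref{lem1}(c) to conclude that each summand on the right has the same joint numerical radius as $\begin{bmatrix} 0 & \mathbf{X} \\ \mathbf{Y} & 0 \end{bmatrix}$. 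Combining with Lemma \ref{lem6}(b) and Lemma \ref{lem1}(e) yields $\tfrac{1}{2}w_e(\mathbf{X}-\mathbf{Y})\leq w_e\!\left(\begin{bmatrix} 0 & \mathbf{X} \\ \mathbf{Y} & 0 \end{bmatrix}\right)$. Taking the maximum finishes the proof.

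\textbf{Expected obstacle.} There is no serious analytic obstacle here; the whole argument is bookkeeping with Lemma \ref{lem1} and subadditivity. The only point needing care is matching signs correctly in the two decompositions so that Lemma \ref{lem1}(d) applies with the right value of $\theta$; once the decompositions above are in place, the rest is mechanical.
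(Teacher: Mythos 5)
Your proposal is correct. The lower bound is exactly the paper's argument: the paper writes $w_e(\mathbf{X}+\mathbf{Y})$ as $w_e\left(\begin{bmatrix}0 & \mathbf{X}+\mathbf{Y}\\ \mathbf{X}+\mathbf{Y} & 0\end{bmatrix}\right)$ via Lemma \ref{lem1}(e), splits it as $\begin{bmatrix}0 & \mathbf{X}\\ \mathbf{Y} & 0\end{bmatrix}+\begin{bmatrix}0 & \mathbf{Y}\\ \mathbf{X} & 0\end{bmatrix}$, and uses subadditivity with Lemma \ref{lem1}(c); it then gets the minus-sign case by substituting $-\mathbf{Y}$ for $\mathbf{Y}$, which is the same bookkeeping you carry out explicitly with Lemma \ref{lem1}(c) and (d). The upper bound is where you genuinely diverge: the paper first conjugates $\begin{bmatrix}0 & \mathbf{X}\\ \mathbf{Y} & 0\end{bmatrix}$ by the unitary $U=\frac{1}{\sqrt{2}}\begin{bmatrix}I & -I\\ I & I\end{bmatrix}$ to obtain $\frac12\begin{bmatrix}\mathbf{X}+\mathbf{Y} & \mathbf{X}-\mathbf{Y}\\ -(\mathbf{X}-\mathbf{Y}) & -(\mathbf{X}+\mathbf{Y})\end{bmatrix}$, and only then splits into a diagonal part (handled by Lemma \ref{lem1}(a)) and an antidiagonal part (handled by (d) and (e)). Your direct decomposition of the antidiagonal matrix into $\frac12\begin{bmatrix}0 & \mathbf{X}+\mathbf{Y}\\ \mathbf{X}+\mathbf{Y} & 0\end{bmatrix}+\frac12\begin{bmatrix}0 & \mathbf{X}-\mathbf{Y}\\ -(\mathbf{X}-\mathbf{Y}) & 0\end{bmatrix}$ reaches the same two terms without the conjugation computation and without invoking Lemma \ref{lem1}(a) at all, so it is a mild but real simplification; the trade-off is only that the paper's conjugation step makes visible the link to the identity in Lemma \ref{lem1}(e), whereas your version hides that structure inside the algebraic split. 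Both arguments rest on the same two pillars, namely the invariances in Lemma \ref{lem1} and the subadditivity (plus homogeneity) of the norm $w_e$.
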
 
\begin{proof}
It follows from Lemma \ref{lem1} (e)  that 
\begin{eqnarray}
\nonumber w_e(\mathbf{X}+\mathbf{Y})&&=w_e\left(\begin{bmatrix}
0 & \mathbf{X+Y} \\
\nonumber \mathbf{X+Y} & 0
\end{bmatrix}\right)\\
&=& w_e\left(\begin{bmatrix}
0 & \mathbf{X} \\
\nonumber \mathbf{Y} & 0
\end{bmatrix}+\begin{bmatrix}
0 & \mathbf{Y} \\
\nonumber \mathbf{X} & 0
\end{bmatrix}\right)\\
\nonumber &\leq&  w_e\left(\begin{bmatrix}
0 & \mathbf{X} \\
\mathbf{Y} & 0
\end{bmatrix}\right)+w_e\left(\begin{bmatrix}
0 & \mathbf{Y} \\
\mathbf{X} & 0
\end{bmatrix}\right)\,\,(\textit{using Lemma \ref{lem6})}\\
&=& 2 w_e\left(\begin{bmatrix}
0 & \mathbf{X} \\
\mathbf{Y} & 0
\end{bmatrix}\right)\,\,\,(\textit{using Lemma \ref{lem1} (c))}.
\label{eqn2}\end{eqnarray}
Replacing $\mathbf{Y}$ by $ - \mathbf{Y}$, we have 
\begin{eqnarray}
w_e(\mathbf{X}-\mathbf{Y})\leq 2 w_e\left(\begin{bmatrix}
0 & \mathbf{X} \\
-\mathbf{Y} & 0
\end{bmatrix}\right)= 2 w_e\left(\begin{bmatrix}
0 & \mathbf{X} \\
\mathbf{Y} & 0
\end{bmatrix}\right).\label{eqn3}\end{eqnarray}
Therefore, the first inequality follows from (\ref{eqn2}) and (\ref{eqn3}).
To prove the second inequality, consider an unitary operator $U=\frac{1}{\sqrt{2}}\begin{bmatrix}
I & -I \\
I &  I
\end{bmatrix}$.
Then we have,
\begin{eqnarray*}
 w_e\left(\begin{bmatrix}
0 & \mathbf{X} \\
\mathbf{Y} & 0
\end{bmatrix}\right)&=& w_e\left(U^*\begin{bmatrix}
0 & \mathbf{X} \\
\mathbf{Y} & 0
\end{bmatrix}U\right)\\
&=& \frac12 w_e\left(\begin{bmatrix}
\mathbf{X+Y} & \mathbf{X-Y} \\
-(\mathbf{X-Y}) & -(\mathbf{X+Y})
\end{bmatrix}\right)\\
&=&\frac12 w_e\left(\begin{bmatrix}
\mathbf{X+Y} & 0 \\
0 & -(\mathbf{X+Y})
\end{bmatrix}+\begin{bmatrix}
0 & \mathbf{X-Y} \\
-(\mathbf{X-Y}) &0
\end{bmatrix}\right)\\
&\leq&\frac12 w_e\left(\begin{bmatrix}
\mathbf{X+Y} & 0 \\
0 & -(\mathbf{X+Y})
\end{bmatrix}\right)+\frac12 w_e\left(\begin{bmatrix}
0 & \mathbf{X-Y} \\
-(\mathbf{X-Y}) & 0
\end{bmatrix}\right)\\
&&\,\,(\textit{using Lemma \ref{lem6})}\\
&=&\frac{w_e(\mathbf{X}+\mathbf{Y})+w_e(\mathbf{X}-\mathbf{Y})}{2}\,\,\,\,\,\,(\textit{using Lemma \ref{lem1})}.
\end{eqnarray*}  
This completes the proof.
\label{them6}\end{proof}

As application of  Theorem \ref{th5}, we derive the following inequalities.
\begin{cor}
Let $ \mathbf{T}=(T_1,T_2, \ldots,T_d)\in\mathbb{B}^d(\mathscr{H})$ and let $T_k=X_k+iY_k$ be the Cartesian decomposition, for all $k=1,2, \ldots,d$. Then $$ \frac{w_e(\mathbf{T})}{2}\leq w_e\left(\begin{bmatrix}
0 & \mathbf{X} \\
e^{i\theta}\mathbf{Y} & 0
\end{bmatrix}\right)\leq w_e(\mathbf{T}),$$ for all $\theta\in\mathbb{R}$. 
\end{cor}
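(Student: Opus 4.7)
The plan is to deduce the corollary directly from Theorem \ref{th5} after first removing the unimodular factor $e^{i\theta}$, and then identifying the quantities $w_e(\mathbf{X}\pm i\mathbf{Y})$ with $w_e(\mathbf{T})$.

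The first step is to observe that by Lemma \ref{lem1}(d), the quantity
\[
w_e\!\left(\begin{bmatrix} 0 & \mathbf{X} \\ e^{i\theta}\mathbf{Y} & 0 \end{bmatrix}\right)
\]
is independent of the parameter $\theta\in\mathbb{R}$. Hence it suffices to evaluate it at a convenient $\theta$; I will choose $\theta=\pi/2$, so that $e^{i\theta}=i$, and reduce the problem to bounding
\[
w_e\!\left(\begin{bmatrix} 0 & \mathbf{X} \\ i\mathbf{Y} & 0 \end{bmatrix}\right).
\]

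The second step is to apply Theorem \ref{th5} with the second argument $\mathbf{Y}$ replaced by $i\mathbf{Y}$. This yields
\[
\tfrac{1}{2}\max\{w_e(\mathbf{X}+i\mathbf{Y}),\,w_e(\mathbf{X}-i\mathbf{Y})\}
\le w_e\!\left(\begin{bmatrix} 0 & \mathbf{X} \\ i\mathbf{Y} & 0 \end{bmatrix}\right)
\le \tfrac{1}{2}\bigl(w_e(\mathbf{X}+i\mathbf{Y})+w_e(\mathbf{X}-i\mathbf{Y})\bigr).
\]
Now $\mathbf{X}+i\mathbf{Y}=\mathbf{T}$ and $\mathbf{X}-i\mathbf{Y}=\mathbf{T^*}$, where $\mathbf{T^*}=(T_1^*,\ldots,T_d^*)$. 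The key small observation is that $|\langle T_k^*x,x\rangle|=|\overline{\langle T_kx,x\rangle}|=|\langle T_kx,x\rangle|$ for every unit vector $x$ and every $k$, so taking the supremum of the Euclidean sum gives $w_e(\mathbf{T^*})=w_e(\mathbf{T})$. Consequently both $w_e(\mathbf{X}+i\mathbf{Y})$ and $w_e(\mathbf{X}-i\mathbf{Y})$ equal $w_e(\mathbf{T})$.

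The third step is to combine these two observations: the displayed bounds collapse to
\[
\tfrac{1}{2}w_e(\mathbf{T})\le w_e\!\left(\begin{bmatrix} 0 & \mathbf{X} \\ i\mathbf{Y} & 0 \end{bmatrix}\right)\le w_e(\mathbf{T}),
\]
and since by Step 1 the middle quantity equals $w_e\!\left(\begin{bmatrix} 0 & \mathbf{X} \\ e^{i\theta}\mathbf{Y} & 0 \end{bmatrix}\right)$ for every $\theta\in\mathbb{R}$, the desired double inequality follows. There is no real obstacle here; the only subtle point is the $\theta$-invariance from Lemma \ref{lem1}(d), which is what allows the Cartesian decomposition $\mathbf{T}=\mathbf{X}+i\mathbf{Y}$ to be matched against the real combinations $\mathbf{X}\pm\mathbf{Y}$ appearing in Theorem \ref{th5}.
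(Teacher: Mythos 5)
Your proof is correct and follows essentially the same route as the paper: substitute $i\mathbf{Y}$ for $\mathbf{Y}$ in Theorem \ref{th5}, identify $\mathbf{X}\pm i\mathbf{Y}$ with $\mathbf{T}$ and $\mathbf{T^*}$, and use $w_e(\mathbf{T^*})=w_e(\mathbf{T})$ together with the $\theta$-invariance from Lemma \ref{lem1}(d). If anything, you make explicit the invocation of Lemma \ref{lem1}(d) that the paper leaves implicit in the phrase ``using Lemma \ref{lem1}.''
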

\begin{proof}
Replacing $\mathbf{Y}$ by $ i\mathbf{Y}$ in Theorem \ref{th5}, and then using Lemma \ref{lem1}, we have $$\frac{\max\left\lbrace w_e(\mathbf{X}+i\mathbf{Y}),w_e(\mathbf{X}-i\mathbf{Y})\right\rbrace}{2}\leq w_e\left(\begin{bmatrix}
0 & \mathbf{X} \\
e^{i\theta}\mathbf{Y} & 0
\end{bmatrix}\right)\leq \frac{w_e(\mathbf{X}+i\mathbf{Y})+w_e(\mathbf{X}-i\mathbf{Y})}{2}.$$
This implies  \begin{eqnarray}\frac{\max\left\lbrace w_e(\mathbf{T}),w_e(\mathbf{T^*})\right\rbrace}{2}\leq w_e\left(\begin{bmatrix}
0 & \mathbf{X} \\
e^{i\theta}\mathbf{Y} & 0
\end{bmatrix}\right)\leq \frac{w_e(\mathbf{T})+w_e(\mathbf{T^*})}{2}.\label{eqn4}\end{eqnarray} Since $w_e(\mathbf{T})=w_e(\mathbf{T^*})$, the proof follows from (\ref{eqn4}).
\end{proof}

In view of the expression for $w_e\left(\begin{bmatrix}
\mathbf{X} & \mathbf{Y} \\
\mathbf{Y} & \mathbf{X}
\end{bmatrix}\right)$ obtained in Lemma \ref{lem1}, it is natural to look for  similar expression for  $w_e\left(\begin{bmatrix}
\mathbf{X} & \mathbf{Y} \\
\mathbf{-Y} & \mathbf{-X}
\end{bmatrix}\right)$. To obtain this first we need to prove the following lemma.
\begin{lemma}
Let $\mathbf{X}=(X_1,X_2, \ldots,X_d)$, $\mathbf{Y}=(Y_1,Y_2, \ldots ,Y_d)$,  $\mathbf{Z}=(Z_1,Z_2, \ldots,Z_d),$  $\mathbf{W}=(W_1,W_2, \ldots,W_d)\in\mathbb{B}^d(\mathscr{H})$. Then $$ w_e\left(\begin{bmatrix}
\mathbf{X} & \mathbf{Y} \\
\mathbf{Z} & \mathbf{W} 
\end{bmatrix}\right)\geq w_e\left(\begin{bmatrix}
\mathbf{X} & 0 \\
0 & \mathbf{W} 
\end{bmatrix}\right)$$ and $$ w_e\left(\begin{bmatrix}
\mathbf{X} & \mathbf{Y} \\
\mathbf{Z} & \mathbf{W} 
\end{bmatrix}\right)\geq w_e\left(\begin{bmatrix}
0 &\mathbf{Y} \\
\mathbf{Z} & 0 
\end{bmatrix}\right).$$ 

\label{lem7}\end{lemma}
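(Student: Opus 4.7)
The plan is to prove both inequalities simultaneously by a single averaging argument using a diagonal unitary. Specifically, I would consider the self-adjoint unitary $U=\begin{bmatrix} I & 0 \\ 0 & -I \end{bmatrix}\in\mathbb{B}(\mathscr{H}\oplus\mathscr{H})$. A direct block-matrix computation gives, for every $k=1,2,\ldots,d$,
\begin{equation*}
U^{*}\begin{bmatrix} X_k & Y_k \\ Z_k & W_k \end{bmatrix}U=\begin{bmatrix} X_k & -Y_k \\ -Z_k & W_k \end{bmatrix}.
\end{equation*}
Averaging and differencing the two $d$-tuples then isolates the diagonal and off-diagonal parts:
\begin{equation*}
\begin{bmatrix} X_k & 0 \\ 0 & W_k \end{bmatrix}=\tfrac{1}{2}\!\left(\begin{bmatrix} X_k & Y_k \\ Z_k & W_k \end{bmatrix}+U^{*}\begin{bmatrix} X_k & Y_k \\ Z_k & W_k \end{bmatrix}U\right),
\end{equation*}
\begin{equation*}
\begin{bmatrix} 0 & Y_k \\ Z_k & 0 \end{bmatrix}=\tfrac{1}{2}\!\left(\begin{bmatrix} X_k & Y_k \\ Z_k & W_k \end{bmatrix}-U^{*}\begin{bmatrix} X_k & Y_k \\ Z_k & W_k \end{bmatrix}U\right).
\end{equation*}

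The second step is to pass to $w_e$ using three facts already at our disposal: (i) the absolute homogeneity $w_e(\alpha\mathbf{T})=|\alpha|\,w_e(\mathbf{T})$ for $\alpha\in\mathbb{C}$, which is immediate from the definition; (ii) the subadditivity $w_e(\mathbf{S}+\mathbf{T})\le w_e(\mathbf{S})+w_e(\mathbf{T})$ from Lemma \ref{lem6}(b); and (iii) the unitary invariance $w_e(U^{*}T_1U,\ldots,U^{*}T_dU)=w_e(\mathbf{T})$ from equation \eqref{pp0}. Applied to each of the two displayed identities, these yield an upper bound of $\tfrac{1}{2}\bigl(w_e(\mathbf{T})+w_e(U^{*}\mathbf{T}U)\bigr)=w_e(\mathbf{T})$, where $\mathbf{T}$ denotes the full block-matrix $d$-tuple. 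This is precisely the pair of inequalities claimed.

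I do not anticipate any substantial obstacle beyond bookkeeping: every needed tool is already in the paper, and the choice $U=\mathrm{diag}(I,-I)$ is the natural one because it preserves the diagonal blocks while flipping the sign of the off-diagonal blocks, making both desired expressions appear as $\tfrac{1}{2}(\mathbf{T}\pm U^{*}\mathbf{T}U)$. As a sanity check, the first inequality also admits an even shorter proof by evaluating the joint numerical radius of the block matrix on unit vectors of the form $(x,0)$ and $(0,y)$ and then invoking Lemma \ref{lem1}(a); however, the unitary-averaging approach has the advantage of dispatching both inequalities in a uniform way, and it is robust enough that a similar identity using $\mathrm{diag}(I,-I)$ together with the off-diagonal swap from Lemma \ref{lem1}(c) could presumably also recover other block compressions if needed later.
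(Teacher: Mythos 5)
Your argument is correct and complete, but it is not quite the paper's route. For the first inequality the paper does exactly what you relegate to a ``sanity check'': it tests the defining supremum on unit vectors of the form $(x,0)$ and $(0,y)$ to get $w_e(\mathbf{X})\le w_e(T)$ and $w_e(\mathbf{W})\le w_e(T)$ (where $T$ denotes the full block-matrix tuple), and then invokes the identity $w_e\!\left(\begin{bmatrix}\mathbf{X}&0\\0&\mathbf{W}\end{bmatrix}\right)=\max\{w_e(\mathbf{X}),w_e(\mathbf{W})\}$ from Lemma \ref{lem1}(a). For the second inequality the paper writes $\begin{bmatrix}0&\mathbf{Y}\\\mathbf{Z}&0\end{bmatrix}=\tfrac12 T+\tfrac12\begin{bmatrix}-\mathbf{X}&\mathbf{Y}\\\mathbf{Z}&-\mathbf{W}\end{bmatrix}$, which is your $\tfrac12\bigl(T-U^{*}TU\bigr)$ with $U=\mathrm{diag}(I,-I)$ in disguise; it then checks that the second summand has the same $w_e$ as $T$ by two successive conjugations with antidiagonal unitaries, rather than by your one-line appeal to \eqref{pp0} together with $w_e(-\mathbf{S})=w_e(\mathbf{S})$. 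So your proof is a genuine streamlining: a single diagonal unitary handles both compressions uniformly, and every ingredient you use (absolute homogeneity, Lemma \ref{lem6}(b), and the unitary invariance \eqref{pp0}, which the paper already applies to block unitaries on $\mathscr{H}\oplus\mathscr{H}$) is available. What the paper's split treatment buys is the explicit intermediate bounds $w_e(\mathbf{X})\le w_e(T)$ and $w_e(\mathbf{W})\le w_e(T)$ for the individual corner entries, which are slightly finer pieces of information than the diagonal-compression inequality alone; your approach recovers only their maximum, which is all the lemma asserts.
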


\begin{proof}
Let $u=(x,0)\in\mathscr{H}\oplus \mathscr{H}$ with $\|u\|=1$, i.e., $\|x\|=1.$ Now, we have 
\begin{eqnarray*}
\left(\sum_{k=1}^{d}\big|\langle\begin{bmatrix}
\mathbf{X} & \mathbf{Y} \\
\mathbf{Z} & \mathbf{W} 
\end{bmatrix}u,u\rangle\big|^2\right)^\frac12
&=&\left(\sum_{k=1}^{d}|\langle (X_kx,Z_kx),(x,0)\rangle|^2 \right)^\frac12\\
&=& \left(\sum_{k=1}^{d}|\langle X_kx,x\rangle|^2 \right)^\frac12.
\end{eqnarray*}
Taking supremum over $\|u\|=1$, we get $$\underset{\|u\|=1}\sup\left(\sum_{k=1}^{d}\big|\langle\begin{bmatrix}
\mathbf{X} & \mathbf{Y} \\
\mathbf{Z} & \mathbf{W} 
\end{bmatrix}u,u\rangle\big|^2\right)^\frac12=\underset{\|x\|=1}\sup\left(\sum_{k=1}^{d}|\langle X_kx,x\rangle|^2 \right)^\frac12=w_e(\mathbf{X}).$$
This gives,      \begin{eqnarray}
  w_e(\mathbf{X})\leq w_e\left(\begin{bmatrix}
\mathbf{X} & \mathbf{Y} \\
\mathbf{Z} & \mathbf{W} 
\end{bmatrix}\right).\label{eqn5}\end{eqnarray}\\
Similarly, \begin{eqnarray} w_e(\mathbf{Y})\leq w_e\left(\begin{bmatrix}
\mathbf{X} & \mathbf{Y} \\
\mathbf{Z} & \mathbf{W} 
\end{bmatrix}\right).\label{eqn6}\end{eqnarray}
Therefore, the desired first inequality follows from (\ref{eqn5}) and (\ref{eqn6}) together with Lemma \ref{lem1} (a).
To prove the second inequality, we write
$\begin{bmatrix}
0 & \mathbf{Y} \\
\mathbf{Z} &0 
\end{bmatrix}=\frac12 \begin{bmatrix}
\mathbf{X} & \mathbf{Y} \\
\mathbf{Z} & \mathbf{W} 
\end{bmatrix} +\frac12 \begin{bmatrix}
-\mathbf{X} & \mathbf{Y} \\
\mathbf{Z} & -\mathbf{W} 
\end{bmatrix}.$ It follows from Lemma \ref{lem6}
that $w_e\left(\begin{bmatrix}
0 & \mathbf{Y} \\
\mathbf{Z} &0 
\end{bmatrix}\right)\leq\frac12 w_e\left( \begin{bmatrix}
\mathbf{X} & \mathbf{Y} \\
\mathbf{Z} & \mathbf{W} 
\end{bmatrix}\right) +\frac12 w_e\left( \begin{bmatrix}
-\mathbf{X} & \mathbf{Y} \\
\mathbf{Z} & -\mathbf{W} 
\end{bmatrix}\right).$
By considering the unitary operator $U=\begin{bmatrix}
0 & -I \\
I & 0 
\end{bmatrix}$, we have $ U^*\begin{bmatrix}
-\mathbf{X} & \mathbf{Y} \\
\mathbf{Z} & -\mathbf{W} 
\end{bmatrix}U=\begin{bmatrix}
-\mathbf{W} & -\mathbf{Z} \\
-\mathbf{Y} & -\mathbf{X} 
\end{bmatrix}$, and using the property $w_e(U^*\mathbf{X}U)=w_e(\mathbf{X})$ we infer that
\begin{eqnarray}
 w_e\left(\begin{bmatrix}
0 & \mathbf{Y} \\
\mathbf{Z} &0 
\end{bmatrix}\right)\leq\frac12 w_e\left( \begin{bmatrix}
\mathbf{X} & \mathbf{Y} \\
\mathbf{Z} & \mathbf{W} 
\end{bmatrix}\right) +\frac12 w_e\left( \begin{bmatrix}
-\mathbf{W} & -\mathbf{Z} \\
-\mathbf{Y} & -\mathbf{X} 
\end{bmatrix}\right).\label{1}\end{eqnarray}\\ 
Again, considering the unitary operator $ U=\begin{bmatrix}
0 & I \\
I & 0
\end{bmatrix},$ we have $U^*\begin{bmatrix}
-\mathbf{W} & -\mathbf{Z} \\
-\mathbf{Y} & -\mathbf{X} 
\end{bmatrix}U=\begin{bmatrix}
-\mathbf{X} & -\mathbf{Y} \\
-\mathbf{Z} & -\mathbf{W} 
\end{bmatrix}$ and $
 w_e\left( \begin{bmatrix}
-\mathbf{W} & -\mathbf{Z} \\
-\mathbf{Y} & -\mathbf{X} 
\end{bmatrix}\right)=w_e\left(\begin{bmatrix}
-\mathbf{X} & -\mathbf{Y} \\
-\mathbf{Z} & -\mathbf{W} 
\end{bmatrix}\right)=w_e\left(\begin{bmatrix}
\mathbf{X} & \mathbf{Y} \\
\mathbf{Z} & \mathbf{W} 
\end{bmatrix}\right).$ By
 using this argument, the required second inequality follows from (\ref{1}).

\end{proof}

\begin{theorem}
Let  $\mathbf{X}=(X_1,X_2, \ldots,X_d)$, $\mathbf{Y}=(Y_1,Y_2, \ldots,Y_d)\in\mathbb{B}^d(\mathscr{H}).$  Then $$\max\left\lbrace w_e(\mathbf{X}),w_e(\mathbf{Y})\right\rbrace\leq w_e\left(\begin{bmatrix}
\mathbf{X} & \mathbf{Y} \\
-\mathbf{Y} & -\mathbf{X}
\end{bmatrix}\right)\leq w_e(\mathbf{X})+ w_e(\mathbf{Y}).$$
\label{th7}\end{theorem}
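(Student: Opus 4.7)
The plan is to obtain both bounds by combining the auxiliary results of Lemma \ref{lem1}, Lemma \ref{lem6}, and Lemma \ref{lem7}. The key observation is that the $2\times 2$ block operator
\[
\mathbf{M}=\begin{bmatrix}\mathbf{X} & \mathbf{Y} \\ -\mathbf{Y} & -\mathbf{X}\end{bmatrix}
\]
splits naturally into a diagonal part and an off-diagonal part whose Euclidean operator radii can be read off directly from Lemma \ref{lem1}.

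For the upper bound, I would write
\[
\mathbf{M}=\begin{bmatrix}\mathbf{X} & 0 \\ 0 & -\mathbf{X}\end{bmatrix}+\begin{bmatrix}0 & \mathbf{Y} \\ -\mathbf{Y} & 0\end{bmatrix},
\]
apply the subadditivity of $w_e(\cdot)$ from Lemma \ref{lem6}(b), and then use Lemma \ref{lem1}(a) to get that the diagonal piece has Euclidean radius $\max\{w_e(\mathbf{X}),w_e(-\mathbf{X})\}=w_e(\mathbf{X})$. For the off-diagonal piece, I would invoke Lemma \ref{lem1}(d) with $\theta=\pi$ to replace $-\mathbf{Y}$ by $\mathbf{Y}$, and then apply the special case of Lemma \ref{lem1}(e) to conclude that its Euclidean radius equals $w_e(\mathbf{Y})$. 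Summing yields $w_e(\mathbf{M})\leq w_e(\mathbf{X})+w_e(\mathbf{Y})$.

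For the lower bound, I would apply Lemma \ref{lem7} to $\mathbf{M}$. The first inequality of that lemma gives
\[
w_e(\mathbf{M})\geq w_e\left(\begin{bmatrix}\mathbf{X} & 0 \\ 0 & -\mathbf{X}\end{bmatrix}\right)=w_e(\mathbf{X}),
\]
again using Lemma \ref{lem1}(a). The second inequality of Lemma \ref{lem7} gives
\[
w_e(\mathbf{M})\geq w_e\left(\begin{bmatrix}0 & \mathbf{Y} \\ -\mathbf{Y} & 0\end{bmatrix}\right),
\]
and then Lemma \ref{lem1}(d) followed by Lemma \ref{lem1}(e) reduces this to $w_e(\mathbf{Y})$. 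Taking the maximum of the two lower estimates completes the proof.

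I do not anticipate a genuine obstacle; every step is a direct appeal to a result already established earlier in the paper, and the only small bookkeeping point is the sign flip in the off-diagonal block, which is handled cleanly by choosing $\theta=\pi$ in Lemma \ref{lem1}(d). The proof is essentially a bookkeeping argument combining subadditivity, the norm-preserving unitary tricks of Lemma \ref{lem1}, and the block-selection lower bounds of Lemma \ref{lem7}.
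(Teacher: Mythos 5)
Your argument is correct and follows essentially the same route as the paper: the lower bound is obtained from Lemma \ref{lem7} combined with Lemma \ref{lem1}, and the upper bound from the diagonal-plus-off-diagonal splitting, subadditivity (Lemma \ref{lem6}), and the evaluations in Lemma \ref{lem1}(a), (d), (e). No gaps; your explicit mention of $\theta=\pi$ in Lemma \ref{lem1}(d) just makes the sign bookkeeping more transparent than the paper's version.
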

\begin{proof}
The first inequality follows from Lemma \ref{lem7} together with Lemma \ref{lem1}.  For the other part, 
\begin{eqnarray*}
w_e\left(\begin{bmatrix}
\mathbf{X} & \mathbf{Y} \\
-\mathbf{Y} & -\mathbf{X}
\end{bmatrix}\right)&\leq& w_e\left(\begin{bmatrix}
\mathbf{X} & 0 \\
0 & -\mathbf{X}
\end{bmatrix}+\begin{bmatrix}
0 & \mathbf{Y} \\
-\mathbf{Y} & o
\end{bmatrix}\right)\\
& \leq &w_e\left(\begin{bmatrix}
\mathbf{X} & 0 \\
0 & -\mathbf{X}
\end{bmatrix}\right)+ w_e\left(\begin{bmatrix}
0 & \mathbf{Y} \\
-\mathbf{Y} & o
\end{bmatrix}\right) \,\,\,(\textit{using Lemma $\ref{lem6}$})\\
&=&  w_e(\mathbf{X})+ w_e(\mathbf{Y}).
\end{eqnarray*} 

\end{proof}

In particular, taking  $\mathbf{Y}=\mathbf{X}$ in Theorem \ref{th7}, we derive the following inequality.
\begin{cor} If $\mathbf{X} \in\mathbb{B}^d(\mathscr{H})$, then
\begin{eqnarray*}
	w_e(\mathbf{X})\leq w_e\left(\begin{bmatrix}
		\mathbf{X} & \mathbf{X} \\
		-\mathbf{X} & -\mathbf{X}
	\end{bmatrix}\right)\leq2 w_e(\mathbf{X}).
\end{eqnarray*}
\end{cor}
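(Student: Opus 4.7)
The plan is to recognize this corollary as an immediate specialization of Theorem \ref{th7}. Setting $\mathbf{Y} = \mathbf{X}$ in the block operator matrix $\begin{bmatrix} \mathbf{X} & \mathbf{Y} \\ -\mathbf{Y} & -\mathbf{X} \end{bmatrix}$ produces exactly $\begin{bmatrix} \mathbf{X} & \mathbf{X} \\ -\mathbf{X} & -\mathbf{X} \end{bmatrix}$, so Theorem \ref{th7} applies verbatim under this choice. The lower bound $\max\{w_e(\mathbf{X}), w_e(\mathbf{Y})\}$ then collapses to $\max\{w_e(\mathbf{X}), w_e(\mathbf{X})\} = w_e(\mathbf{X})$, and the upper bound $w_e(\mathbf{X}) + w_e(\mathbf{Y})$ becomes $w_e(\mathbf{X}) + w_e(\mathbf{X}) = 2\, w_e(\mathbf{X})$, which is precisely the claim.

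Since Theorem \ref{th7} is stated for arbitrary $\mathbf{X}, \mathbf{Y} \in \mathbb{B}^d(\mathscr{H})$ with no structural hypotheses, the substitution is legal with no side conditions to verify and I anticipate no technical obstacle. In effect the entire argument reduces to the single remark \emph{take $\mathbf{Y} = \mathbf{X}$ in Theorem \ref{th7}}; I would simply state this and display the two resulting inequalities. It is worth noting that the width of the sandwich $[w_e(\mathbf{X}),\, 2w_e(\mathbf{X})]$ reflects the off-diagonal sign change in the matrix, which prevents a clean exact formula of the kind that Lemma \ref{lem1}(e) provides for $\begin{bmatrix} \mathbf{X} & \mathbf{X} \\ \mathbf{X} & \mathbf{X} \end{bmatrix}$.
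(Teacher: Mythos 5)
Your proposal is correct and is exactly the paper's argument: the corollary is obtained by setting $\mathbf{Y}=\mathbf{X}$ in Theorem \ref{th7}, whereupon the lower bound $\max\{w_e(\mathbf{X}),w_e(\mathbf{Y})\}$ becomes $w_e(\mathbf{X})$ and the upper bound $w_e(\mathbf{X})+w_e(\mathbf{Y})$ becomes $2w_e(\mathbf{X})$. Nothing further is needed.
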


Next we obtain the following lower and upper bounds for the joint numerical radius of $2\times 2$ operator matrices whose entries are $d$-tuple operators.

\begin{theorem}
Let $\mathbf{X}=(X_1,X_2, \ldots,X_d)$, $\mathbf{Y}=(Y_1,Y_2, \ldots,Y_d)$,  $\mathbf{Z}=(Z_1,Z_2, \ldots,Z_d),$  $\mathbf{W}=(W_1,W_2, \ldots,W_d)\in\mathbb{B}^d(\mathscr{H})$.  Then $$ w_e\left(\begin{bmatrix}
\mathbf{X} & \mathbf{Y} \\
\mathbf{Z} & \mathbf{W}
\end{bmatrix}\right)\geq\max\left\lbrace w_e(\mathbf{X}),w_e(\mathbf{W}),w_e\left(\frac{\mathbf{Y}+\mathbf{Z}}{2}\right),w_e\left(\frac{\mathbf{Y}-\mathbf{Z}}{2}\right)\right\rbrace$$ and  $$ w_e\left(\begin{bmatrix}
\mathbf{X} & \mathbf{Y} \\
\mathbf{Z} & \mathbf{W}
\end{bmatrix}\right)\leq\max\left\lbrace w_e(\mathbf{X}),w_e(\mathbf{W})\right\rbrace+w_e\left(\frac{\mathbf{Y}+\mathbf{Z}}{2}\right)+w_e\left(\frac{\mathbf{Y}-\mathbf{Z}}{2}\right).$$
\end{theorem}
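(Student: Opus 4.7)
The key observation is that both bounds can be obtained by passing through the single off-diagonal operator matrix $\begin{bmatrix} 0 & \mathbf{Y} \\ \mathbf{Z} & 0 \end{bmatrix}$ and the diagonal operator matrix $\begin{bmatrix} \mathbf{X} & 0 \\ 0 & \mathbf{W} \end{bmatrix}$, both of which have already been analyzed. The lower bound will follow from Lemma \ref{lem7} together with the left half of Theorem \ref{th5}; the upper bound will follow from the additive splitting of the operator matrix combined with the subadditivity of $w_e$ (Lemma \ref{lem6}(b)), with the off-diagonal piece controlled by the right half of Theorem \ref{th5}.

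\textbf{Lower bound.} The first ingredient is the pair of inequalities
$$w_e\left(\begin{bmatrix}\mathbf{X} & \mathbf{Y}\\ \mathbf{Z} & \mathbf{W}\end{bmatrix}\right)\ \geq\ w_e\left(\begin{bmatrix}\mathbf{X} & 0\\ 0 & \mathbf{W}\end{bmatrix}\right)\quad\text{and}\quad w_e\left(\begin{bmatrix}\mathbf{X} & \mathbf{Y}\\ \mathbf{Z} & \mathbf{W}\end{bmatrix}\right)\ \geq\ w_e\left(\begin{bmatrix}0 & \mathbf{Y}\\ \mathbf{Z} & 0\end{bmatrix}\right),$$
both of which are exactly the content of Lemma \ref{lem7}. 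Combining the first with Lemma \ref{lem1}(a) gives the $\max\{w_e(\mathbf{X}),w_e(\mathbf{W})\}$ term. For the off-diagonal contribution, the left inequality in Theorem \ref{th5} (applied with $\mathbf{X}$ and $\mathbf{Y}$ there replaced by $\mathbf{Y}$ and $\mathbf{Z}$) yields
$$w_e\left(\begin{bmatrix}0 & \mathbf{Y}\\ \mathbf{Z} & 0\end{bmatrix}\right)\ \geq\ \tfrac{1}{2}\max\{w_e(\mathbf{Y}+\mathbf{Z}),\,w_e(\mathbf{Y}-\mathbf{Z})\}=\max\left\{w_e\left(\tfrac{\mathbf{Y}+\mathbf{Z}}{2}\right),\,w_e\left(\tfrac{\mathbf{Y}-\mathbf{Z}}{2}\right)\right\},$$
using that $w_e(\cdot)$ is a norm. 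Taking the maximum of all four bounds proves the lower estimate.

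\textbf{Upper bound.} Write the algebraic identity
$$\begin{bmatrix}\mathbf{X} & \mathbf{Y}\\ \mathbf{Z} & \mathbf{W}\end{bmatrix}=\begin{bmatrix}\mathbf{X} & 0\\ 0 & \mathbf{W}\end{bmatrix}+\begin{bmatrix}0 & \mathbf{Y}\\ \mathbf{Z} & 0\end{bmatrix}.$$
Apply the subadditivity of $w_e(\cdot)$ from Lemma \ref{lem6}(b) to obtain
$$w_e\left(\begin{bmatrix}\mathbf{X} & \mathbf{Y}\\ \mathbf{Z} & \mathbf{W}\end{bmatrix}\right)\ \leq\ w_e\left(\begin{bmatrix}\mathbf{X} & 0\\ 0 & \mathbf{W}\end{bmatrix}\right)+w_e\left(\begin{bmatrix}0 & \mathbf{Y}\\ \mathbf{Z} & 0\end{bmatrix}\right).$$
Then Lemma \ref{lem1}(a) replaces the first summand by $\max\{w_e(\mathbf{X}),w_e(\mathbf{W})\}$, while the right inequality of Theorem \ref{th5} replaces the second summand by $\tfrac{1}{2}(w_e(\mathbf{Y}+\mathbf{Z})+w_e(\mathbf{Y}-\mathbf{Z}))=w_e(\frac{\mathbf{Y}+\mathbf{Z}}{2})+w_e(\frac{\mathbf{Y}-\mathbf{Z}}{2})$. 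This finishes the upper estimate.

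\textbf{Main obstacle.} There is no genuinely hard step: once one notices the two decompositions (one diagonal/off-diagonal splitting for the upper bound, and Lemma \ref{lem7} restricting from the full matrix to its diagonal or off-diagonal part for the lower bound), the theorem is assembled from Lemma \ref{lem1}(a), Lemma \ref{lem6}(b), Lemma \ref{lem7}, and Theorem \ref{th5}. The only minor care needed is to rewrite the factors $1/2$ produced by Theorem \ref{th5} as $w_e(\tfrac{\mathbf{Y}\pm\mathbf{Z}}{2})$ using absolute homogeneity of the norm $w_e$, so that the terms appearing in the two bounds match those in the theorem statement.
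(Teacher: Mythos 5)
Your proposal is correct and follows essentially the same route as the paper's own proof: both obtain the lower bound from Lemma \ref{lem7} combined with Lemma \ref{lem1}(a) and the left inequality of Theorem \ref{th5}, and both obtain the upper bound from the diagonal/off-diagonal splitting, subadditivity (Lemma \ref{lem6}(b)), Lemma \ref{lem1}(a), and the right inequality of Theorem \ref{th5}. No substantive differences.
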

\begin{proof}
It follows from Lemma \ref{lem7} that \begin{eqnarray*}
&&  w_e\left(\begin{bmatrix}
\mathbf{X} & \mathbf{Y} \\
\mathbf{Z} & \mathbf{W}
\end{bmatrix}\right)\\
&\geq&\max\left\lbrace w_e\left(\begin{bmatrix}
\mathbf{X} & 0 \\
0 & \mathbf{W} 
\end{bmatrix}\right),\left(\begin{bmatrix}
0 &\mathbf{Y} \\
\mathbf{Z} & 0 
\end{bmatrix}\right)\right\rbrace\\
&=& \max\left\lbrace w_e(\mathbf{X}),w_e(\mathbf{X}),w_e\left(\begin{bmatrix}
0 &\mathbf{Y} \\
\mathbf{Z} & 0 
\end{bmatrix}\right)\right\rbrace \,\,\,(\textit{using Lemma \ref{lem1}(a)})\\
&\geq&\max\left\lbrace w_e(\mathbf{X}),w_e(\mathbf{W}),w_e\left(\frac{\mathbf{Y}+\mathbf{Z}}{2}\right),w_e\left(\frac{\mathbf{Y}-\mathbf{Z}}{2}\right)\right\rbrace \,\,\,(\textit{using Theorem \ref{th5})
}. 
\end{eqnarray*}
Again, it follows from Lemma \ref{lem1}(a) and Theorem \ref{th5} that \begin{eqnarray*}
w_e\left(\begin{bmatrix}
\mathbf{X} & \mathbf{Y} \\
\mathbf{Z} & \mathbf{W}
\end{bmatrix}\right)&\leq& w_e\left(\begin{bmatrix}
\mathbf{X} & 0\\
0 & \mathbf{W}
\end{bmatrix}+\begin{bmatrix}
0 & \mathbf{Y} \\
\mathbf{Z} &0
\end{bmatrix}\right)\\
&\leq&w_e\left(\begin{bmatrix}
\mathbf{X} & 0\\
0 & \mathbf{W}
\end{bmatrix}\right)+w_e\left(\begin{bmatrix}
0 & \mathbf{Y} \\
\mathbf{Z} &0
\end{bmatrix}\right)\,\,(\textit{using Lemma \ref{lem6})} \\
&=&\max\left\lbrace w_e(\mathbf{X}),w_e(\mathbf{W})\right\rbrace+w_e\left(\frac{\mathbf{Y}+\mathbf{Z}}{2}\right)+w_e\left(\frac{\mathbf{Y}-\mathbf{Z}}{2}\right).
\end{eqnarray*}
This completes the proof.
\end{proof}

Now by applying the results obtained above, and  using the identity
 \begin{eqnarray}
\frac{a+b}{2}=\max\{a,b\} -\frac{|a-b|}{2} \,\, \,\text{for $a,b\geq 0$},
\label{eqn7}\end{eqnarray}
we develop the following inequality.
 
\begin{theorem}
	Let $\mathbf{X}=(X_1,X_2, \ldots,X_d)$, $\mathbf{Y}=(Y_1,Y_2, \ldots,Y_d)\in\mathbb{B}^d(\mathscr{H})$. Then $$ w_e\left(\begin{bmatrix}
0 & \mathbf{X} \\
\mathbf{Y} & 0
\end{bmatrix}\right)+\frac{|w_e(\mathbf{X+Y})-w_e(\mathbf{X-Y})|}{2}\leq w_e(\mathbf{X})+w_e(\mathbf{Y}).$$\end{theorem}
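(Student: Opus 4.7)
The plan is to combine the upper bound from Theorem \ref{th5} with the elementary identity \eqref{eqn7} and finally apply subadditivity of $w_e$ to conclude.

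First, I would invoke the second inequality of Theorem \ref{th5} to write
\[
w_e\!\left(\begin{bmatrix} 0 & \mathbf{X} \\ \mathbf{Y} & 0 \end{bmatrix}\right) \leq \frac{w_e(\mathbf{X}+\mathbf{Y}) + w_e(\mathbf{X}-\mathbf{Y})}{2}.
\]
Then I would apply the identity \eqref{eqn7} with $a = w_e(\mathbf{X}+\mathbf{Y})$ and $b = w_e(\mathbf{X}-\mathbf{Y})$ (both are nonnegative, so this is legitimate) to rewrite the right-hand side as
\[
\max\{w_e(\mathbf{X}+\mathbf{Y}), w_e(\mathbf{X}-\mathbf{Y})\} - \frac{|w_e(\mathbf{X}+\mathbf{Y}) - w_e(\mathbf{X}-\mathbf{Y})|}{2}.
\]
Rearranging moves the absolute-value term to the left-hand side, yielding
\[
w_e\!\left(\begin{bmatrix} 0 & \mathbf{X} \\ \mathbf{Y} & 0 \end{bmatrix}\right) + \frac{|w_e(\mathbf{X}+\mathbf{Y}) - w_e(\mathbf{X}-\mathbf{Y})|}{2} \leq \max\{w_e(\mathbf{X}+\mathbf{Y}), w_e(\mathbf{X}-\mathbf{Y})\}.
\]

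Finally, I would bound the maximum on the right by $w_e(\mathbf{X}) + w_e(\mathbf{Y})$. This follows from Lemma \ref{lem6}(b) (subadditivity of $w_e$) together with the fact that $w_e(-\mathbf{Y}) = w_e(\mathbf{Y})$: both $w_e(\mathbf{X}+\mathbf{Y})$ and $w_e(\mathbf{X}-\mathbf{Y})$ are dominated by $w_e(\mathbf{X}) + w_e(\mathbf{Y})$, so their maximum is as well. Chaining this bound with the previous display gives exactly the claimed inequality.

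No step looks like a real obstacle here; the proof is essentially a two-line combination of Theorem \ref{th5}, the numerical identity \eqref{eqn7}, and subadditivity of $w_e$. The only thing to be careful about is the order: the identity must be applied \emph{before} bounding the maximum, otherwise the beneficial $-\frac{|a-b|}{2}$ correction term is lost.
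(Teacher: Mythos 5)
Your proposal is correct and follows exactly the same route as the paper: apply the upper bound of Theorem \ref{th5}, rewrite the average via identity \eqref{eqn7} as a maximum minus half the absolute difference, and then dominate the maximum by $w_e(\mathbf{X})+w_e(\mathbf{Y})$ using subadditivity. No gaps; this matches the paper's argument step for step.
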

\begin{proof}
From Theorem \ref{th5} and identity  \eqref{eqn7}, we have \begin{eqnarray*}
 w_e\left(\begin{bmatrix}
0 & \mathbf{X} \\
\mathbf{Y} & 0
\end{bmatrix}\right)&\leq&\frac{w_e(\mathbf{X}+\mathbf{Y})+w_e(\mathbf{X}-\mathbf{Y})}{2}\\
&=&\max\left\lbrace w_e(\mathbf{X}+\mathbf{Y}),w_e(\mathbf{X}-\mathbf{Y})\right\rbrace- \frac{|w_e(\mathbf{X+Y})-w_e(\mathbf{X-Y})|}{2}\\
&\leq&  w_e(\mathbf{X})+w_e(\mathbf{Y})-\frac{|w_e(\mathbf{X+Y})-w_e(\mathbf{X-Y})|}{2}.
\end{eqnarray*}
This gives the desired inequality.
\end{proof}

Next theorem reads as follows:

\begin{theorem}
Let  $\mathbf{X}=(X_1,X_2, \ldots,X_d)$, $\mathbf{Y}=(Y_1,Y_2, \ldots,Y_d),$  $\mathbf{A}=(A_1,A_2, \ldots,A_d)$,  $\mathbf{B}=(B_1,B_2, \ldots,B_d)\in\mathbb{B}^d(\mathscr{H}).$ Then $$ w_e(\mathbf{A^*XB+B^*YA})\leq 2 \|\mathbf{A}\||\mathbf{B}\|w_e\left(\begin{bmatrix}
0 & \mathbf{X} \\
\mathbf{Y} & 0
\end{bmatrix}\right).$$ In particular,  $$w_e(\mathbf{A^*XB+B^*XA})\leq 2 \|\mathbf{A}\||\mathbf{B}\|w_e(\mathbf{X}).$$
\end{theorem}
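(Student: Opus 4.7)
The plan is to reduce the estimate to the numerical radius inequality for each component operator matrix $\tilde{T}_k := \begin{bmatrix} 0 & X_k \\ Y_k & 0 \end{bmatrix}$ acting on $\mathscr{H}\oplus\mathscr{H}$, after a parameter-tuning trick to fit $\|\mathbf{A}\|\|\mathbf{B}\|$ rather than $\|\mathbf{A}\|^2+\|\mathbf{B}\|^2$. Write $\mathbf{\tilde{T}} := \begin{bmatrix} 0 & \mathbf{X} \\ \mathbf{Y} & 0 \end{bmatrix} = (\tilde{T}_1,\ldots,\tilde{T}_d)\in\mathbb{B}^d(\mathscr{H}\oplus\mathscr{H})$.

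The key identity, verified by direct expansion, is that for any real $t>0$, any unit vector $x\in\mathscr{H}$, and each $k$,
\[
\bigl\langle (A_k^*X_kB_k+B_k^*Y_kA_k)x,x\bigr\rangle
 \;=\; \bigl\langle \tilde{T}_k(tA_kx,\,t^{-1}B_kx),\,(tA_kx,\,t^{-1}B_kx)\bigr\rangle,
\]
since the two $t$-factors cancel in each inner product. Taking absolute values and applying the numerical radius bound on $\mathscr{H}\oplus\mathscr{H}$ yields
\[
|\langle (A_k^*X_kB_k+B_k^*Y_kA_k)x,x\rangle|
\;\le\; w(\tilde{T}_k)\bigl(t^2\|A_kx\|^2+t^{-2}\|B_kx\|^2\bigr).
\]
Since the left-hand side is independent of $t$, I would then minimize the right-hand side over $t>0$ (taking $t^2=\|B_kx\|/\|A_kx\|$ when both norms are nonzero, with the degenerate cases trivial), obtaining
\[
|\langle (A_k^*X_kB_k+B_k^*Y_kA_k)x,x\rangle|\;\le\;2\,w(\tilde{T}_k)\,\|A_kx\|\,\|B_kx\|.
\]

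Next I would use the component-wise domination $w(\tilde{T}_k)\le w_e(\mathbf{\tilde{T}})$ (already used earlier in the paper), square, sum over $k$, and estimate
\[
\sum_{k=1}^{d}\|A_kx\|^2\|B_kx\|^2
\;\le\;\Bigl(\max_k\|A_kx\|^2\Bigr)\sum_{k=1}^{d}\|B_kx\|^2
\;\le\;\|\mathbf{A}\|^2\|\mathbf{B}\|^2,
\]
where the last step uses $\max_k\|A_kx\|^2\le\sum_k\|A_kx\|^2\le\|\mathbf{A}\|^2$ for $\|x\|=1$. This gives
\[
\sum_{k=1}^{d}\bigl|\langle(A_k^*X_kB_k+B_k^*Y_kA_k)x,x\rangle\bigr|^2
\;\le\;4\,w_e^2(\mathbf{\tilde{T}})\,\|\mathbf{A}\|^2\|\mathbf{B}\|^2.
\]
Taking the supremum over $\|x\|=1$ and extracting the square root delivers the main inequality. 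The particular case $\mathbf{Y}=\mathbf{X}$ follows from the general one together with the identity $w_e\!\left(\begin{bmatrix} 0 & \mathbf{X}\\ \mathbf{X} & 0\end{bmatrix}\right)=w_e(\mathbf{X})$ established in Lemma \ref{lem1}(e).

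The main obstacle, in my view, is noticing the $t\leftrightarrow t^{-1}$ scaling symmetry that converts $\|A_kx\|^2+\|B_kx\|^2$ (which would only yield the weaker bound $\|\mathbf{A}\|^2+\|\mathbf{B}\|^2$) into $2\|A_kx\|\|B_kx\|$, which has the correct homogeneity and matches the product $2\|\mathbf{A}\|\|\mathbf{B}\|$ on the right-hand side; the remaining steps are standard once this scaling is in place.
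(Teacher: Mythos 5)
Your proof is correct and rests on the same core idea as the paper's: evaluate the quadratic form of the anti-diagonal operator matrix at vectors built from $A_kx$ and $B_kx$, and exploit the $t\leftrightarrow t^{-1}$ scaling to convert $\|A_kx\|^2+\|B_kx\|^2$ into $2\|A_kx\|\,\|B_kx\|$. The only differences are technical bookkeeping: the paper applies the scaling once, globally, at the very end (replacing $\mathbf{A},\mathbf{B}$ by $t\mathbf{A},t^{-1}\mathbf{B}$ with $t=\sqrt{\|\mathbf{B}\|/\|\mathbf{A}\|}$) and controls the sum over $k$ via $\left(\sum_k|\cdot|^2\right)^{1/2}\leq\sum_k|\cdot|$, whereas you optimize $t$ per component and use $\sum_k\|A_kx\|^2\|B_kx\|^2\leq\left(\max_k\|A_kx\|^2\right)\sum_k\|B_kx\|^2$ --- both routes yield the same constant.
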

\begin{proof}
Let $x,y\in\mathscr{H} $ be non zero, and  let $ z=\frac{1}{\sqrt{\|x\|^2+\|y\|^2}}(
x,
y)
$. Then $z$ is an unit vector in $\mathscr{H}\oplus\mathscr{H}$, and so we have, \begin{eqnarray*}
w_e\left(\begin{bmatrix}
0 & \mathbf{X} \\
\mathbf{Y} & 0
\end{bmatrix}\right)&\geq&  \left(\sum_{k=1}^{d}\big|\langle \begin{bmatrix}
0 & X_k \\
Y_k & 0
\end{bmatrix}z,z\rangle\big|^2\right)^\frac12
= \frac{\left(\sum_{k=1}^{d}|\langle X_k y, x\rangle+\langle Y_k x,y\rangle|^2\right)^\frac12}{(\|x\|^2+\|y\|^2)}.
\end{eqnarray*}
Therefore, $\left(\|x\|^2+\|y\|^2\right) w_e\left(\begin{bmatrix}
0 & \mathbf{X} \\
\mathbf{Y} & 0
\end{bmatrix}\right)\geq\left(\sum_{k=1}^{d}|\langle X_k y, x\rangle+\langle Y_k x,y\rangle|^2\right)^\frac12$
for all $x,y\in\mathscr{H}$. This implies that $\left(\|x\|^2+\|y\|^2\right)w_e\left(\begin{bmatrix}
0 & \mathbf{X} \\
\mathbf{Y} & 0
\end{bmatrix}\right)\geq|\langle X_k y, x\rangle+\langle Y_k x,y\rangle|$ holds for each $k=1,2,\ldots,d.$
Now, replacing $x$ and $y$ by $A_kx$ and $B_kx$, respectively, and then summing, we get \begin{eqnarray*}
\sum_{k=1}^{d}\big|\langle X_kB_kx,A_kx\rangle+\langle Y_kA_kx, B_kx\rangle\big|&\leq& \sum_{k=1}^{d}\left(\left(\|A_kx\|^2+\|B_kx\|^2\right)w_e\left(\begin{bmatrix}
0 & \mathbf{X} \\
\mathbf{Y} & 0
\end{bmatrix}\right)\right)\\
&=& w_e\left(\begin{bmatrix}
0 & \mathbf{X} \\
\mathbf{Y} & 0
\end{bmatrix}\right)\sum_{k=1}^{d}\left(\|A_kx\|^2+\|B_kx\|^2\right)\\
&\leq&  w_e\left(\begin{bmatrix}
0 & \mathbf{X} \\
\mathbf{Y} & 0
\end{bmatrix}\right)\left(\|\mathbf{A}\|^2+\|\mathbf{B}\|^2\right)\|x\|^2.
\end{eqnarray*}
So,
\begin{eqnarray*}
\left(\sum_{k=1}^{d}\big|\langle X_kB_kx,A_kx\rangle+\langle Y_kA_kx, B_kx\rangle\big|^2\right)^\frac12 \nonumber &\leq& \sum_{k=1}^{d}\big|\langle X_kB_kx,A_kx\rangle+\langle Y_kA_kx, B_kx\rangle\big|\\
&\leq& w_e\left(\begin{bmatrix}
0 & \mathbf{X} \\
\mathbf{Y} & 0
\end{bmatrix}\right)\left(\|\mathbf{A}\|^2+\|\mathbf{B}\|^2\right)\|x\|^2.
\label{eqn12}\end{eqnarray*} 
Taking supremum over $\|x\|=1$, we have
\begin{eqnarray}
 w_e(\mathbf{A^*XB+B^*YA})\leq\left(\|\mathbf{A}\|^2+\|\mathbf{B}\|^2\right) w_e\left(\begin{bmatrix}
0 & \mathbf{X} \\
\mathbf{Y} & 0
\end{bmatrix}\right).\label{eqn10}\end{eqnarray} 
Now, the desired inequality follows from \eqref{eqn10} by replacing $\mathbf{A}$ and $\mathbf{B}$ by t$\mathbf{A}$ and $\frac{1}{t}$ $\mathbf{B}$, respectively, where $ t=\sqrt{\frac{\|\mathbf{B}\|}{\|\mathbf{A}\|}}.$ In particular, taking $\mathbf{X}=\mathbf{Y}$ we achieve the desired second inequality.
\end{proof}

Finally, we obtain the following norm inequality.

\begin{theorem}
	Let  $\mathbf{X}=(X_1,X_2, \ldots,X_d)$, $\mathbf{Y}=(Y_1,Y_2, \ldots,Y_d)$,  $\mathbf{Z}=(Z_1,Z_2, \ldots,Z_d),$  $\mathbf{W}=(W_1,W_2, \ldots,W_d)\in\mathbb{B}^d(\mathscr{H}).$ Then $$ \norm{\begin{bmatrix}
			\mathbf{X} & \mathbf{Y} \\
			\mathbf{Z} & \mathbf{W} 
	\end{bmatrix}}\leq \norm{\begin{bmatrix}
			\|\mathbf{X}\| & \|\mathbf{Y}\| \\
			\|\mathbf{Z}\| & \|\mathbf{W}\| 
	\end{bmatrix}}.$$  
	\label{them3}\end{theorem}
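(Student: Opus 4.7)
The plan is to unpack the joint operator norm on the left through Proposition~\ref{leml1}, apply the triangle inequality coordinatewise in $k$, use Minkowski's inequality in $\ell^2(\{1,\ldots,d\})$ to separate the contributions of $\mathbf{X},\mathbf{Y},\mathbf{Z},\mathbf{W}$, and finally recognize the resulting expression as the squared action of the scalar matrix
\[
M=\begin{bmatrix}\|\mathbf{X}\| & \|\mathbf{Y}\|\\ \|\mathbf{Z}\| & \|\mathbf{W}\|\end{bmatrix}
\]
on a unit vector of $\mathbb{C}^2$.

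First I would fix a unit vector $u=(x,y)\in\mathscr{H}\oplus\mathscr{H}$ and, for each $k$, estimate $\|X_kx+Y_ky\|\leq\|X_kx\|+\|Y_ky\|$ by the triangle inequality, and similarly $\|Z_kx+W_ky\|\leq\|Z_kx\|+\|W_ky\|$. Taking $\ell^2$-norms over $k$ and applying Minkowski's inequality gives
\[
\Big(\sum_{k=1}^d\|X_kx+Y_ky\|^2\Big)^{1/2}\leq\Big(\sum_{k=1}^d\|X_kx\|^2\Big)^{1/2}+\Big(\sum_{k=1}^d\|Y_ky\|^2\Big)^{1/2}\leq\|\mathbf{X}\|\,\|x\|+\|\mathbf{Y}\|\,\|y\|,
\]
where the last step uses the definition of the joint operator norm. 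An identical estimate for the bottom row produces the upper bound $\|\mathbf{Z}\|\,\|x\|+\|\mathbf{W}\|\,\|y\|$.

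Next I would square both bounds and add, obtaining
\[
\sum_{k=1}^d\left\|\begin{bmatrix}X_k & Y_k\\ Z_k & W_k\end{bmatrix}u\right\|^2\leq\bigl(\|\mathbf{X}\|s+\|\mathbf{Y}\|t\bigr)^2+\bigl(\|\mathbf{Z}\|s+\|\mathbf{W}\|t\bigr)^2=\|M\tilde x\|^2,
\]
where $s=\|x\|$, $t=\|y\|$ and $\tilde x=(s,t)\in\mathbb{C}^2$ is a unit vector. Bounding $\|M\tilde x\|^2\leq\|M\|^2$ and taking supremum over $\|u\|=1$ then delivers the claim.

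The arguments are essentially routine; the one conceptual point is the bridging step that converts the bound for the block $d$-tuple operator norm into the operator norm of its scalar majorant via two distinct uses of $\ell^2$-structure (Minkowski over the $d$-tuple index $k$, followed by the Euclidean structure on $\mathbb{C}^2$). I do not anticipate any substantive obstacle.
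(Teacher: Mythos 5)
Your proposal is correct and follows essentially the same route as the paper: both arguments reduce the bound for $\sum_{k}\big\|\begin{bmatrix}X_k & Y_k\\ Z_k & W_k\end{bmatrix}u\big\|^2$ to the quantity $\big(\|\mathbf{X}\|\|x\|+\|\mathbf{Y}\|\|y\|\big)^2+\big(\|\mathbf{Z}\|\|x\|+\|\mathbf{W}\|\|y\|\big)^2=\|M\widetilde{x}\|^2$ with $\widetilde{x}=(\|x\|,\|y\|)$, and then take the supremum over unit $u$. The only (cosmetic) difference is that you obtain the row-wise bound via the triangle inequality followed by Minkowski over $k$, whereas the paper expands $\|X_kx+Y_ky\|^2$ and controls the cross terms with Cauchy--Schwarz; both yield exactly the same majorant.
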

\begin{proof}
	Let $u=(x,y)\in\mathscr{H}\oplus \mathscr{H}$ with $\|u\|=1,$ i.e., $\|x\|^2+\|y\|^2=1.$ Then we have,
	\begin{eqnarray*}
		&&\sum_{k=1}^{d}\norm{\begin{bmatrix}
				X_k & Y_k \\
				Z_k & W_k
			\end{bmatrix}u}^2\\
		&=&\sum_{k=1}^{d}\langle\begin{bmatrix}
			X_k & Y_k \\
			Z_k & W_k
		\end{bmatrix}u, \begin{bmatrix}
			X_k & Y_k \\
			Z_k & W_k
		\end{bmatrix}u\rangle\\
		&=&\sum_{k=1}^{d}\left(\|X_kx+Y_ky\|^2+\|Z_kx+W_ky\|^2\right)\\
		&=&\sum_{k=1}^{d}( \|X_kx\|^2+\|Y_ky\|^2+\|Z_kx\|^2+\|W_ky\|^2+2 Re\langle X_kx, Y_ky\rangle+2 Re\langle W_ky, Z_kx\rangle)\\
		&&\,\,\,\,\,(\textit{here $Re\langle x,y\rangle$ means real part of $\langle x, y\rangle$})\\
		&\leq&\sum_{k=1}^{d}( \|X_kx\|^2+\|Y_ky\|^2+\|Z_kx\|^2+\|W_ky\|^2+2|\langle X_kx, Y_ky\rangle|+2|\langle W_ky, Z_kx\rangle|)\\
		&\leq&\sum_{k=1}^{d} \|X_kx\|^2+\sum_{k=1}^{d} \|Y_ky\|^2+\sum_{k=1}^{d} \|Z_kx\|^2+\sum_{k=1}^{d} \|W_ky\|^2 \\
		&& +2\sum_{k=1}^{d} \|X_kx\|\|Y_ky\|+2\sum_{k=1}^{d} \|Z_kx\|\|W_ky\|\\
		&\leq& \|\mathbf{X}\|^2\|x\|^2+\|\mathbf{Y}\|^2\|y\|^2+\|\mathbf{Z}\|^2\|x\|^2+\|\mathbf{W}\|^2\|y\|^2\\&&\hspace{1.0cm}+2\left(\sum_{k=1}^{d}\|X_kx\|^2\right)^\frac12\left(\sum_{k=1}^{d}\|Y_ky\|^2\right)^\frac12+2\left(\sum_{k=1}^{d}\|Z_kx\|^2\right)^\frac12\left(\sum_{k=1}^{d}\|W_ky\|^2\right)^\frac12,\\&&\,\,\,(\textit{by Cauchy-Schwarz inequality})\\
		&\leq& \left(\|\mathbf{X}\|^2+\|\mathbf{Z}\|^2\right)\|x\|^2+ \left(\|\mathbf{Y}\|^2+\|\mathbf{W}\|^2\right)\|y\|^2+2\|\mathbf{X}\|\|\mathbf{Y}\|\|x\|\|y\|+2\|\mathbf{Z}\|\|\mathbf{W}\|\|x\|\|y\|\\
		&=& \langle \begin{bmatrix}
			\|\mathbf{X}\| & \|\mathbf{Y}\| \\
			\|\mathbf{Z}\| & \|\mathbf{W}\| 
		\end{bmatrix}^*\begin{bmatrix}
			\|\mathbf{X}\| & \|\mathbf{Y}\| \\
			\|\mathbf{Z}\| & \|\mathbf{W}\| 
		\end{bmatrix} \widetilde{x},\widetilde{x}\rangle\,\,\,\textit{(here $\widetilde{x}=(
			\|x\|,
			\|y\|)\in \mathbb{C}^2$})\\
		&\leq& \norm{\begin{bmatrix}
				\|\mathbf{X}\| & \|\mathbf{Y}\| \\
				\|\mathbf{Z}\| & \|\mathbf{W}\| 
			\end{bmatrix}^*\begin{bmatrix}
				\|\mathbf{X}\| & \|\mathbf{Y}\| \\
				\|\mathbf{Z}\| & \|\mathbf{W}\| 
		\end{bmatrix}}\\
		&=&\norm{\begin{bmatrix}
				\|\mathbf{X}\| & \|\mathbf{Y}\| \\
				\|\mathbf{Z}\| & \|\mathbf{W}\| 
		\end{bmatrix}}^2.
 	\end{eqnarray*}
	Therefore, the desired inequality follows by taking supremum over $\|u\|=1$.
\end{proof}

\noindent 
{\textbf{Statements and Declarations}}\\
Data sharing not applicable to this article as no datasets were generated or analysed during the current study.
Authors also declare that there is no financial or non-financial interests that are directly or indirectly related to the work submitted for publication.  On behalf of all authors, the corresponding author states that there is no conflict of interest.

\bibliographystyle{amsplain}

\begin{thebibliography}{99}
	



\bibitem{BDMP} P. Bhunia, S.S. Dragomir, M.S. Moslehian, K. Paul, Lectures on numerical radius inequalities, Infosys Science Foundation Series, Infosys Science Foundation Series in Mathematical Sciences, Springer Cham, (2022), XII+209 pp. https://doi.org/10.1007/978-3-031-13670-2

\bibitem{BF} H. Baklouti, K. Feki, Commuting tuples of normal operators in Hilbert spaces, Complex Anal. Oper. Theory 14 (2020), no. 6, Paper No. 56, 19 pp. 

\bibitem{ct} M. Ch$\bar{o}$, M. Takaguchi, Boundary points of joint numerical ranges, Pacific J. Math. 95 (1981), No. 1, 27-35.

\bibitem{C} R. Curto, S.H. Lee, J. Yoon, 
Hyponormality and subnormality for powers of commuting pairs of subnormal operators,
J. Funct. Anal. 245 (2007), no. 2, 390--412.







\bibitem{D} A.T. Dash, Joint spectra, Studia Math. 45 (1973), 225--237.

\bibitem{SD1} S.S. Dragomir, Upper bounds for the Euclidean operator radius and applications, J. Inequal. Appl. 2008, Art. ID 472146, 20 pp.


\bibitem{SD} S.S. Dragomir, Some inequalities for the Euclidean operator radius of two operators in Hilbert spaces, Linear Algebra Appl. 419 (2006), 256--264.




\bibitem{KY} K. Feki, T. Yamazaki, Joint numerical radius of spherical Aluthge transforms of tuples of Hilbert space operators, Math. Inequalities Appl., no. 2 (2021), 405-420.

\bibitem{GW} P.Y. Wu, H.-L. Gau, Numerical ranges of Hilbert space operators, Encyclopedia of Mathematics and its Applications, 179. Cambridge University Press, Cambridge, 2021. xviii+483 pp. ISBN: 978-1-108-47906-6 47-02 

\bibitem{GUS} K.E. Gustafson, D.K.M. Rao, Numerical Range, Springer, New York, (1997).



\bibitem{H}  P.R. Halmos, A Hilbert Space Problem Book, 2nd ed., Springer-Verlag, New York, 1982.

\bibitem{SPK} S. Jana, P. Bhunia, K. Paul, Euclidean  operator radius inequalities of a pair of bounded linear operators and their application, Bull. Braz. Math. Soc. (N.S.) 54 (2023), no. 1, Paper No. 1. https://doi.org/10.1007/s00574-022-00320-w 

\bibitem{MSS} M.S. Moslehian, M. Sattari, K. Shebrawi, Extensions of Euclidean operator radius inequalities, Math. Scand. 120 (2017), no. 1, 129-144.

\bibitem{CP} C. Pearcy, An elementary proof of the power inequality for the numerical radius, Michigan Math. J. 13 (1966), 289--291.


\bibitem{P} G. Popescu, Unitary invariants in multivariable operator theory,
Mem. Amer. Math. Soc. 200 (2009), no. 941, vi+91 pp. ISBN: 978-0-8218-4396-3











\end{thebibliography}

\end{document}